\numberwithin{equation}{section}
  \theoremstyle{plain}
 \newtheorem{theorem}{Theorem}
 \newtheorem{propositionletter}{Proposition}
  \newtheorem{theoremletter}{Theorem}
\newtheorem{proposition}{Proposition}
 \newtheorem{lemma}{Lemma}
\newtheorem*{conjecture}{Conjecture}
\newtheorem*{claim}{\indent Claim}
 \newtheorem{remark}{Remark}
\theoremstyle{definition}
\newtheorem{example}{Example}
\def \no{\nonumber}
\renewcommand{\div}{{\rm div}}
\newcommand{\pa}{\partial}
\newcommand{\Sn}{\mathbb{S}^n}
\newcommand{\Sp}{\mathbb{S}}
\newcommand{\N}{\mathbb{N}}
\newcommand{\tr}{\text{tr}}
\newcommand{\RR}{\mathbb{R}}
\newcommand{\Rn}{\mathbb{R}^n}
\newcommand{\ud}{\mathrm{d}}
\title{Green function rigidity
and the mass of  hypersurfaces under inversion }
\author{Xuezhang Chen\thanks{X. Chen and J. Gan are partially supported by NSFC (No.11771204). Email: xuezhangchen@nju.edu.cn.},~ Jiaxue Gan\thanks{J. Gan's email: 602025210004@smail.nju.edu.cn.}~ and Yalong Shi\thanks{Y. Shi is partially supported by NSFC (No.12371058). Email:shiyl@nju.edu.cn.}\\
		{\small $^{\ast}$$^{\dag}$$^{\ddag}$School of Mathematics \& IMS, Nanjing University, Nanjing 210093, P.R. China }}
\date{}
\begin{document}
\maketitle

\begin{abstract}
This is a sequel to \cite{Chen-Shi}. We prove the Green function rigidity conjecture in \cite{Chen-Shi} for conformal Laplacian in dimension $n\geq 3$. For the Paneitz operator, we prove the Green function rigidity conjecture when $n\neq 4k+2, k\geq 2$. Important ingredients in our proof are the positive mass theorem and the positive energy theorem for Paneitz operator. As a byproduct, we also obtain a new formula for the ADM mass of an asymptotically flat hypersurface that allows for a non-entire graph.

\medskip

{\bf MSC2020: } 35J08, 53C18, 53C40, 53C21.
\end{abstract}
\medskip
{\footnotesize \tableofcontents}

\section{Introduction}

A complete Riemannian manifold $(N^n,g)$ is called asymptotically flat of asymptotic order $\tau>0$ and regularity order $\ell\in\N$, if there exists a compact set $K\subset N$ such that  $N\setminus K$ is diffeomorphic to $ \mathbb{R}^n \backslash B_R(0)$ for some $R>0$, and under this coordinate chart, $g-\delta \in O_{\ell}(|y|^{-\tau})$ as $|y| \to \infty$. The coordinates $y$ are called asymptotic coordinates. Here, $g-\delta \in O_{\ell}(|y|^{-\tau})$ means that 
$$ |\pa^k(g_{\alpha \beta}-\delta_{\alpha \beta})|=O(|y|^{-\tau-k}), \quad \forall~0 \leq k \leq \ell.$$
In the literature, the regularity order $\ell$ is usually chosen to be $2$. Accordingly, when we go to study the fourth order energy for the Paneitz operator, the regularity order requires to be  $4$.

The ADM mass of an asymptotically flat (AF) manifold $(N^n,g)$ was defined by Arnowitt-Deser-Misner \cite{ADM} in the 1960s in the context of general relativity: 
\begin{equation}\label{Def:ADM_mass}
m(g):=\frac{1}{|\Sp^{n-1}|}\lim_{t \to \infty} \int_{\Sp_t^{n-1}}g^{\beta \gamma}(\pa_\gamma g_{\alpha \beta}-\pa_\alpha g_{\beta \gamma})\nu^\alpha \ud\sigma
\end{equation}
if the limit exists. Here, $\nu$ is the outward unit normal vector field on the sphere $\Sp_t^{n-1}:=\{|y|=t\}$ with respect to the Euclidean metric. In \cite{bartnik}, Bartnik proved that when the asymptotic order $\tau>\frac{n-2}{2}$, regularity order $\ell=2$ and the scalar curvature is integrable, the mass for AF manifold is independent of the choice of asymptotic coordinates and is thus a geometric invariant for $(N,g)$. See Lee-Parker \cite{Lee-Parker} for a good introduction to the basics of ADM mass.

In \cite{SchYau79,Schoen87}, R. Schoen and S.T. Yau proved the Positive Mass Theorem (PMT):{\it\   Assume that $(N^n,g), 3\leq n\leq 7$ is an asymptotically flat manifold of asymptotic order $\tau>\frac{n-2}{2}$ (regularity order at least 2), and $0 \leq R_g \in L^1(N,g)$. Then the mass $m(g) \geq 0$, with equality if and only if $(N,g)$ is isometric to the flat $\RR^n$.} 

When $N$ is a spin manifold, an alternative proof is given by E. Witten \cite{Witten} in dimension 3 (see also \cite{ParkerTaubes}),  which works for all dimension under the spin assumption, see \cite{bartnik}.

The Positive Mass Theorem plays an important role in the study of first and third named authors \cite{Chen-Shi} of the Green function for the conformal Laplacian on closed Euclidean hypersurfaces.  Based on the belief that the explicit representation formula of the Green function for certain conformally invariant linear (pseudo-)differential operator should determine the geometry and topology of the manifold, the  authors \cite{Chen-Shi} raised the following \emph{Green Function Rigidity Conjecture (\textbf{GFRC})}.
\begin{conjecture}
    Let $M^n\subset\RR^{n+1}$ be a closed embedded hypersurface with induced metric $g$. Assume that for some $k\in\mathbb{Z}_+$, the Green function $G$ for the GJMS operator $P_{2k}^g$ exists. Suppose one of the following conditions holds: 
    \begin{enumerate}[(1)]
        \item $2k=n$ and  {\bf for some}  $Q\in M$, $G(\cdot, Q)$ is of the form $-c_n\log\|\cdot-Q\|+c$, where  $c \in \RR$ and $c_n=\frac{2}{(n-1)!|\Sn|}$;
        \item $2k<n$ or  $2k>n$ when $n$ is odd, and {\bf for some}  $Q\in M$, $G(\cdot, Q)$ is of the form $c_{n,k}\|\cdot-Q\|^{2k-n}$, where $c_{n,k}=\frac{\Gamma(\frac{n}{2}-k)}{2^{2k}\pi^{\frac{n}{2}}\Gamma(k)}$.
    \end{enumerate}
    Then $(M,g)$ is a round sphere. Here and throughout, let $\|\cdot\|$ denote the Euclidean norm. 
\end{conjecture}
Recall that the GJMS operator $P_{2k}^g$ for $k \in \mathbb{Z}_+$ on a Riemannian manifold $(M^n,g)$ is a conformally invariant linear differential operator whose principal part agrees with $(-\Delta_g)^k$, named after Graham, Jenne, Mason and Sparling \cite{GJMS}. When $k=1$, $P_2^g$ is precisely the conformal Laplacian:
$$P_2^g=-\Delta_g+\frac{n-2}{4(n-1)}R_g,\quad n\geq 2.$$
When $n=2$, it reduces to the Laplacian. 

The operator $P_4^g$ was discovered earlier by Paneitz \cite{Paneitz} in $1983$ on a smooth Riemmanian manifold $(M^n,g),n \geq 3$, and hence also referred to as `Paneitz operator':
$$P_4^g=\Delta^2_g+\frac{4}{n-2}\div\big(\sum_\alpha \mathrm{Ric}(\nabla^g\cdot,e_\alpha)e_\alpha\big)-\frac{n^2-4n+8}{2(n-1)(n-2)}\div(R_g\nabla^g\cdot)+\frac{n-4}{2}Q_g,$$
where $Q_g$ is Branson's $Q$-curvature, defined by
\begin{equation}\label{def:Q-curv}
Q_g=-\Delta_g \sigma_1(A_g)+4 \sigma_2(A_g)+\frac{n-4}{2}\sigma_1(A_g)^2.
\end{equation}
 Here
$$A_g=\frac{1}{n-2}\Big(\mathrm{Ric}_g -\frac{R_g}{2(n-1)}g\Big)$$
is the Schouten tensor and $\sigma_k(A_g)$ is the $k$-th elementary symmetric  function of $g^{-1}A_g$.

For more about the GJMS operators and their applications, readers are referred to \cite{CG} for a recent survey.

For the conformal Laplacian and Paneitz operator, the authors \cite{Chen-Shi} carried out the first step towards the above conjecture.  For example, they proved that the pole $Q$ of Green function  must be an umbilical point, and succeeded in verifying \emph{\textbf{GFRC}} for the conformal Laplacian in dimensions $n=3,4,5$. 

The proof  of \emph{\textbf{GFRC}}  for the conformal Laplacian  in \cite{Chen-Shi} goes as follows. First, the familiar inverted coordinates happen to provide asymptotic coordinates of order $2$ for $(M\backslash\{Q\}, \hat g)$, where $\hat g=G(\cdot,Q)^{\frac{4}{n-2}}g$, such that when $n=3,4,5$, the mass for $(M\backslash\{Q\}, \hat g)$ is well defined. Then a  direct computation yields that the mass of $(M\backslash\{Q\}, \hat g)$ vanishes. Since the scalar curvature of $\hat g$ is zero, the Positive Mass Theorem guarantees that $(M\backslash\{Q\}, \hat g)$ is isometric to the flat Euclidean space. Observe that $\hat g=(c_{n,2})^{\frac{4}{n-2}} \|\cdot-Q\|^{-4}g$ is essentially the induced metric of $I_Q(M\backslash\{Q\})\subset \RR^{n+1}$, where 
\begin{equation}\label{def:inversion}
I_Q(z)=Q+\frac{z-Q}{\|z-Q\|^2},\quad \forall~z\in \RR^{n+1}\backslash\{Q\}
\end{equation}
 is the inversion of $\RR^{n+1}$ with respect to $Q$. Thus $I_Q(M\backslash\{Q\})$ is a complete flat hypersurface. Hartman-Nirenberg's theorem in \cite{Hartman-Nirenberg} states that this hypersurface must be a generalized cylinder in $\RR^{n+1}$. This gives an almost explicit parametrization of $I_Q(M\backslash\{Q\})$, hence also of $M\backslash\{Q\}$. Combining this with the fact that $Q$ is umbilical forces such a generalized cylinder to be a hyperplane. Hence, $(M,g)$ is a round sphere. 

In this paper, we extend the above result to all dimensions $n\geq 3$.

\begin{theorem}\label{thm:main_conf_Laplace}
\textbf{GFRC} is true for the conformal Laplacian in all dimensions $n\geq 3$.
\end{theorem}

Very recently, Lai and Zhang  \cite{Lai-Zhang} proved the  \emph{\textbf{GFRC}} for the Laplacian on surfaces using a different approach. Hence,  the \emph{\textbf{GFRC}} is now completely resolved for $P_2^g$ in all dimensions $n\geq 2$.

The general strategy for proving Theorem \ref{thm:main_conf_Laplace} is similar to that of \cite{Chen-Shi}. However, there are serious difficulties in higher dimensions.
\begin{itemize}
    \item We need to construct asymptotic coordinate system for all dimensions of order $\tau>\frac{n-2}{2}$.  The inversion of the usual geodesic normal coordinate is far from enough.
    \item Accordingly, to prove the vanishing of mass, we need much more computations even though following the same strategy. Some exceptional phenomenon occurs in dimensions $n=4k,k\geq 2$.
\end{itemize}

The key idea for solving these problems is to study the contact order of $M$ with its osculating sphere at $Q$,  with which we construct the  \emph{adapted} asymptotic coordinate system for all dimensions. Here for an umbilical point $Q$ of $M$ with principal curvature $\lambda$, the osculating sphere of $M$ at $Q$ is the sphere of radius $\frac{1}{|\lambda|}$ that tangent to $M$ at $Q$ with the same outer normal vector. To this end, we are able to prove a general theorem for hypersurfaces, which should be of independent interest.

\begin{theorem}\label{thm:main_ADM_mass}
    Let $M^n, n \geq 3$ be a closed embedded hypersurface of $\RR^{n+1}$ with induced metric $g$. Assume that $Q \in M$ is umbilical. Write $\rho:=\|\cdot-Q\|^2$, and set $\hat{g}:=\rho^{-2}g$. Then the following are true.
    \begin{enumerate}[(1)]
        \item If $n=3,4,5$, then $R_{\hat g}\in L^1(M,\hat g)$;
        \item if $n\geq 6$ and $R_{\hat g}\in L^1(M \backslash \{Q\},\hat g)$, or  $n=3,4,5$, then there exists an asymptotic coordinate system such that when $4 \nmid n$,
 $(M\setminus\{Q\},\hat g)$ is asymptotically flat of order $\lfloor \frac{n}{2} \rfloor+1$; when $4 \mid n$, $(M\setminus\{Q\},\hat g)$ is asymptotically flat of order $\frac{n}{2}$.
    \end{enumerate}
    Furthermore,  the ADM mass $m(\hat g)$ of $(M\setminus\{Q\},\hat g)$ is non-negative, and $m(\hat g)=0$ except for $n=4k, k \geq 2$.
\end{theorem}

\begin{remark}
The integrability of $R_{\hat g}$ ensures that the contact order of $M$ with its osculating sphere at $Q$ is at least  $\lfloor \frac{n}{2} \rfloor+1$ (resp. $\frac{n}{2}$ )  when $4 \nmid n$ (resp. $4 \mid n$). As we shall show, similar phenomena also occur for the $Q$-curvature.
\end{remark}
\begin{remark}
When $n=4k, k\geq 2$,  there are indeed examples whose mass is strictly positive under the same assumptions of Theorem \ref{thm:main_ADM_mass}; see Example \ref{example}.
\end{remark}

Thanks to Theorem \ref{thm:main_ADM_mass}, the same argument of \cite{Chen-Shi} applies to establish Theorem \ref{thm:main_conf_Laplace} when $4\nmid n$. However, to cope with the $4\mid n$ case,  a direct consequence of Example  \ref{example} is that the local analysis in the proof of Theorem \ref{thm:main_ADM_mass} fails.  At this point, we instead need a new mass formula for asymptotically flat hypersurfaces, which was once discovered by Lam in his thesis \cite{Lam} for entire graphs.

\begin{theorem}\label{thm:main_massFormula}
    Let $N$ be a complete non-compact hypersurface in $\RR^{n+1}$ with induced metric $g$. Assume that there is a compact subset $K\subset N$ such that $N\setminus K$ is the graph of a smooth function $\psi: \RR^n\setminus B_R(0)\to\RR$, satisfying
    $$|\partial_\alpha\psi|+|y||\partial^2_{\alpha\beta}\psi|+|y|^2|\partial^3_{\alpha\beta\gamma}\psi|=O(|y|^{-\frac{\tau}{2}}),$$
    with $\tau>
    \frac{n-2}{2}$. Then there is a global tangent vector field $X$ on $N$ such that $\div_g X=\langle e_{n+1},\bold n\rangle R_g$ where $\bold n$ is the unit normal vector field of $N$ such that $\langle e_{n+1},\bold n\rangle>0$ on the end $N\setminus K$ for $|y|\gg 1$, and that
    $$m(g)=\frac{1}{|\Sp^{n-1}|}\int_N \div_g X\ud V_g=\frac{1}{|\Sp^{n-1}|}\int_N \langle e_{n+1},\bold n\rangle R_g \ud V_g.$$
In particular, if $R_g\equiv 0$, then $m(g)=0$.
\end{theorem}

Since the metric $\hat g$ in the proof of Theorem \ref{thm:main_conf_Laplace} has zero scalar curvature, the above theorem in fact implies that the mass automatically vanishes. Hence, this concludes the proof of Theorem \ref{thm:main_conf_Laplace} using the same strategy as in \cite{Chen-Shi}.

\medskip

We proceed to study the \emph{\textbf{GFRC}} for  the Paneitz operator $P_4^g$. In parallel to Theorem \ref{thm:main_conf_Laplace}, the $Q$-curvature version of the positive energy theorem in \cite{GM,HY1,HY2,ALL} plays a key role. 

In the past decade, some great advances have been made in the constant $Q$-curvature problem  on a closed manifold $(M^n,g)$. For $n\geq 5$ M. Gursky and A. Malchiodi \cite{GM} proposed a sufficient condition of $\mathrm{ker}P_4^g=\{0\}$ that $R_g \geq 0$ and  $Q_g \gneqq 0$, which leads to the existence of  conformal metrics with positive constant $Q$-curvature. Subsequently, Hang and Yang \cite[Proposition 1.1]{HY3} improved the sufficient condition to the hypothesis that $Y(M,[g])>0$ and $Q_g \gneqq 0$. The most general existence result for $P_4^g$ and $n \geq 6$ was proved by Gursky, Hang and Lin \cite{GHL}. For $n=3$, Hang and Yang \cite{HY2} proved a PMT theorem associated to the Paneitz operator.

In a recent paper \cite{ALL}, on an asymptotically flat manifold $(N^n, g), n\geq 5$ of order $\tau>\frac{n-4}{2}$ and regularity order $\ell=4$, R. Avalos, P. Laurain and J. Lira  introduced a fourth-order mass (called ``energy" by the authors) by
\begin{align*}
\mathsf{m}_4(g)=\frac{1}{|\Sp^{n-1}|}\lim_{t \to \infty}\int_{\Sp_t^{n-1}}(\pa_\beta \pa_\gamma \pa_\gamma  g_{\alpha \alpha}-\pa_\beta \pa_\gamma \pa_\alpha g_{\alpha \gamma})\nu^\beta \ud \sigma_{\Sp_t^{n-1}},
\end{align*}
which had been defined earlier by B. Michel \cite{Michel} for higher-order GJMS operators. 
Furthermore, the authors \cite{ALL} established a  positive energy theorem; see Section \ref{Sect:Paneitz} for precise statement.

Next we establish a $Q$-curvature analogue to Theorem \ref{thm:main_ADM_mass}.

\begin{theorem}\label{thm:main_4th_mass}
    Let $M^n, n \geq 5$ be a closed embedded hypersurface of $\RR^{n+1}$ with induced metric $g$. Assume that $Q \in M$ is umbilical. Write $\rho:=\|\cdot-Q\|^2$ and set $\hat{g}:=\rho^{-2}g$. Then the following are true.
    \begin{enumerate}[(1)]
        \item If $n=5,6,7$, then $Q_{\hat g}\in L^1(M,\hat g)$, and $(M\setminus\{Q\},\hat g)$ is asymptotically flat of order $3$;
        \item if $n\geq 8$ and  $Q_{\hat g}\in L^1(M \backslash \{Q\},\hat g)$, there exists an asymptotic coordinate system such that when when $n \neq 4j+2, j\geq 2$,
 $(M\setminus\{Q\},\hat g)$ is asymptotically flat of order $\lfloor \frac{n}{2} \rfloor$; when $n=4j+2, j\geq 2$, $(M\setminus\{Q\},\hat g)$ is asymptotically flat of order $ \frac{n}{2}-1$.
    \end{enumerate}
    Furthermore,  the fourth-order energy $\mathsf{m}_4(\hat g)$ of $(M\setminus\{Q\},\hat g)$ is nonnegative, and $\mathsf{m}_4(\hat g)=0$ except for $n=4j+2, j \geq 2$.
\end{theorem}

Using the same strategy as the conformal Laplacian case, we make elaborate efforts to establish the following Green function rigidity for the Paneitz operator.

\begin{theorem}\label{thm:Paneitz_oper}
For $n\geq 5$ and $n\neq 4j+2, j\geq 2$, let $(M^n,g)$ be a closed embedded hypersurface in $\RR^{n+1}$ such that the Yamabe constant $Y(M,[g])>0$. Then \textbf{GFRC} is true for the Paneitz operator $P_4^g$. When $n=3$, under the same assumption, $(M^3, g)$ is globally conformal to $\Sp^3$.
\end{theorem}

At present,  we have to exclude the case  $n=4j+2, j\geq 2$ in the statement of Theorem \ref{thm:Paneitz_oper}, since the vanishing of $\mathsf{m}_4(\hat g)$ in such dimensions remains open.  As in Theorem \ref{thm:main_conf_Laplace}, a global analysis together with the fact $Q_{\hat g}=0$ should come into play.  It is interesting to know whether we can find a nice formula for $\mathsf{m}_4$ similar to that of Theorem \ref{thm:main_massFormula}.  Moreover, the $n=3,4$ cases differ significantly from other cases and need to be further explored.

In the next section, we describe the relationship between the integrability of $R_{\hat g}$ and the contact order of $M$ with its osculating sphere at an umbilical point $Q$. This is crucial for subsequent discussions. Then in Section \ref{Sect:Asym_coor_mass}, we first prove Theorem \ref{thm:main_ADM_mass} by introducing the \emph{adapted} asymptotic coordinates, and derive the mass formula in Theorem \ref{thm:main_massFormula} for asymptotically flat hypersurfaces those are graphs outside a compact subset. Combining ideas from \cite{Chen-Shi}, we prove Theorem \ref{thm:main_conf_Laplace}. Finally, similar ideas apply to prove Theorems \ref{thm:main_4th_mass},\ref{thm:Paneitz_oper} for the Paneitz operator, and such proofs occupy Sections \ref{Sect4}, \ref{Sect:Paneitz}. There are two appendices, those should be of independent interest. The first one is an application of our method of proving Theorem \ref{thm:main_massFormula}: we generalize a formula of Reilly for graphs to general Euclidean hypersurfaces. The second one is an elementary and simplified proof of the known fact (cf. \cite{ALL}) that the mass of Green function for the Paneitz operator in \cite{GM, HY1} is a  constant multiple of $\mathsf{m}_4(\hat g)$.

\section{Integrability of scalar curvature}

Let $M$ be an embedded hypersurface in $\RR^{n+1}$ with induced metric $g$, and $Q\in M$ be an umbilical point. Without loss of generality, we may assume $Q=0\in\RR^{n+1}$.  Define $\rho:=\|P-Q\|^2=\langle \mathbf{r},\mathbf{r}\rangle$, and $\eta:=\langle \mathbf{r},\mathbf{n}\rangle$, where $\mathbf{r}$ is the position vector of $M$ in $\RR^{n+1}$  and $\mathbf{n}$ is the inward unit normal vector field. Given any point $P\in M$, we choose an orthonormal frame of $M$, $\{e_\alpha; 1 \leq \alpha \leq n\}$. Then we have $\nabla_g\rho=2\sum_\alpha\langle \mathbf{r},e_\alpha\rangle e_\alpha=2(\mathbf{r}-\mathbf{r}^{\perp})=2\mathbf{r}-2\eta\mathbf{n}$ and consequently
$$|\nabla_g\rho|^2=\sum_\alpha 4\langle \mathbf{r},e_\alpha\rangle^2=4\|\mathbf{r}\|^2-4\langle \mathbf{r},\mathbf{n}\rangle^2=4\rho-4\eta^2.$$
Also, at $P$ we have
$$\nabla_g^2\rho(e_\alpha,e_\beta)=\langle\nabla_{e_\alpha}^g (\nabla_g\rho),e_\beta\rangle=2\delta_{\alpha\beta}+2\eta II(e_\alpha,e_\beta),$$
where $II(e_\alpha,e_\beta)=-\langle \nabla_{e_\alpha}\mathbf{n},e_\beta\rangle$ is the second fundamental form. As a consequence,
$$\Delta_g\rho=2n+2\eta H.$$
To sum up,  in any coordinate system we have
\begin{equation}\label{eqn:nabla_lap_rho}
      |\nabla_g \rho|^2=4\rho-4\eta^2,\quad \rho_{,\alpha\beta}=2g_{\alpha\beta}+2\eta h_{\alpha\beta},\quad \Delta_g\rho=2n+2\eta H.  
    \end{equation}

Locally $M$ is given as the graph of a smooth function $f:B_\delta(0)\to \RR$ with $f(0)=0$ and $\nabla f(0)=0$. Here, $\nabla$ means the gradient with respect to the flat metric on $\RR^n$. Then under the coordinate chart $x=(x_1,\cdots, x_n)$, the induced metric on $M$ is $g_{\alpha\beta}=\delta_{\alpha\beta}+f_\alpha f_\beta$, where $f_\alpha$ means $\frac{\partial f}{\partial x_\alpha}$. It is straightforward that $\rho=|x|^2+f^2(x)$ 
    and
    \begin{equation*}
    \eta=\frac{f-x\cdot\nabla f}{\sqrt{1+|\nabla f|^2}}.
    \end{equation*}
Also, it is direct to check that the second fundamental form is  $$II=\sum_{\alpha,\beta}h_{\alpha\beta}\ud x_\alpha\otimes \ud x_\beta=\frac{1}{\sqrt{1+|\nabla f|^2}}\sum_{\alpha,\beta}f_{\alpha\beta}\ud x_\alpha\otimes \ud x_\beta$$ and the mean curvature is $$H=\mathrm{tr}_g(II)=\frac{1}{\sqrt{1+|\nabla f|^2}}g^{\alpha \beta}f_{\alpha \beta}.$$
At $Q$, we have
$$g_{\alpha\beta}=\delta_{\alpha\beta},\quad\Gamma_{\alpha\beta}^\gamma=0,\quad h_{\alpha\beta}=f_{\alpha\beta}(0),\quad \mathrm{~~and~~}\  H=\Delta f(0),$$
where $\Delta$ denotes the Euclidean Laplacian. In the following, $(x_1,\cdots,x_n)$ will always be the above graph coordinate system.  

\medskip

It is well known that if the order $\tau$ of an asymptotically flat $n$-manifold is $>\frac{n-2}{2}$, and the scalar curvature is $L^1$, then the ADM mass is well defined and independent of the asymptotic coordinates (cf. \cite{bartnik}). Now we are ready to study the integrability of the scalar curvature. Since the integral of the scalar curvature is geometric and then coordinate free, we prefer to work under the coordinate system $x=(x_1,\cdots,x_n)$.

Under the conformal change of metric $\hat g:=\rho^{-2}g$, the scalar curvature transforms by
\begin{align}\label{eqn:scalar_curv}
   R_{\hat g}=&\rho^2\Big(R_g+2(n-1)\frac{\Delta_g \rho}{\rho}-n(n-1)\frac{|\nabla^g\rho|^2}{\rho^2}\Big) \no\\
   =& \rho^2\Big(R_g+4(n-1)H\frac{\eta}{\rho}+4n(n-1)\frac{\eta^2}{\rho^2}\Big),
\end{align}
where we used \eqref{eqn:nabla_lap_rho} in the last equality. Fix an umbilical point $Q\in M$. Then in a punctured neighborhood of $Q$, we have
\begin{equation}\label{eqn:RdV}
    R_{\hat g}\ud V_{\hat g}=\rho^{2-n}\Big(R_g+4(n-1)H\frac{\eta}{\rho}+4n(n-1)\frac{\eta^2}{\rho^2}\Big) \ud V_g. 
\end{equation}
Notice that $R_g=H^2-|II|^2$ by Gauss equation. Clearing a common nonzero factor $(1+|\nabla f|^2)^{-1}$ in \eqref{eqn:scalar_curv}, we only need to expand 
 $$\mathcal{R}:=4n(n-1)\Big(\frac{f-x\cdot \nabla f}{\rho}\Big)^2+4(n-1)g^{\alpha\beta}f_{\alpha\beta}\frac{f-x\cdot \nabla f}{\rho}+(g^{\alpha\beta}f_{\alpha\beta})^2-g^{\alpha\mu}g^{\beta\nu}f_{\alpha\beta}f_{\mu\nu}$$
 around $Q$. For brevity, let $B$ denote a matrix $(B^\alpha_\nu)$ with the element $B^\alpha_\nu:=g^{\alpha\beta}f_{\beta\nu}$, then the above equation becomes
 \begin{equation}\label{R:concise}
\mathcal{R}=4n(n-1)\Big(\frac{f-x\cdot\nabla f}{\rho}+\frac{1}{2n}\tr B\Big)^2-\tr\Big( \mathring{B}^2\Big),
\end{equation}
where $\mathring{B}^\alpha_\nu=B^\alpha_\nu-\frac{\tr B}{n}\delta^\alpha_\nu$ is the trace-free part of $B$.

\begin{proposition}\label{prop:scalar_curv_order}
    There hold $\mathcal{R}=O(|x|^2)$ and  $R_{\hat g}=O(|x|^6)$ near $Q$.
\end{proposition}

\begin{proof} From the assumption that $Q$ is umbilical,  near $Q$ we write
 $$f(x)=\frac{H(Q)}{2n}|x|^2+A_3(x)+O(|x|^4),$$
 where $A_3(x)$ denotes a homogeneous polynomial of degree $3$. Set $r:=|x|, A_3(x)=r^3 A_3(\theta)$ and $\Delta_\theta:=\Delta_{\Sp^{n-1}}$, where $\theta=x|x|^{-1}\in \Sp^{n-1}$. Short $H$ for $H(Q)$, and $A_3$ for $A_3(x)$. Then
 \begin{equation*}
     x\cdot\nabla f=\frac{H}{n}r^2+3A_3(\theta)r^3+O(r^4)
 \end{equation*}
and thereby
\begin{equation*}
    f-x\cdot\nabla f=-\frac{H}{2n}r^2-2A_3(\theta)r^3+O(r^4).
\end{equation*}
On the other hand, we have
\begin{equation*}
    \rho=|x|^2+f^2(x)=r^2\Big(1+\frac{H^2}{4n^2}r^2+O(r^3)\Big).
\end{equation*}
Putting the above two equations together yields
\begin{equation*}
\frac{f-x\cdot \nabla f}{\rho}=-\frac{H}{2n}-2A_3(\theta)r+O(r^2).    
\end{equation*}

Observe that
\begin{align*}    
B_\alpha^\nu=& g^{\alpha\mu}f_{\mu\nu}\\
=&\Big(\delta^{\alpha\mu}-\frac{f_\alpha f_\mu}{1+|\nabla f|^2}\Big)f_{\mu\nu}=\Big(\delta^{\alpha\mu}-f_\alpha f_\mu\Big)f_{\mu\nu}+O(r^3)\\
=&\Big(\delta^{\alpha\mu}-\frac{H^2}{n^2}x_\alpha x_\mu+O(r^3)\Big)\cdot\Big(\frac{H}{n}\delta_{\mu\nu}+\partial^2_{\mu\nu}A_3+O(r^2)\Big)+O(r^2)\\
=&\frac{H}{n}\delta^\alpha_\nu+\partial^2_{\alpha\nu}A_3+O(r^2),
\end{align*}
thereby
$$\mathring B_\alpha^\nu=\partial^2_{\alpha\nu}A_3-\frac{\Delta A_3}{n}\delta_\alpha^\nu$$
and
\begin{align*}
\tr B=g^{\alpha\beta}f_{\alpha\beta}=&H+\Delta A_3+O(r^2)\no\\
=&H+\Big(3(n+1)A_3(\theta)+\Delta_{\theta}A_3(\theta)\Big)r+O(r^2).
\end{align*}
This follows that
\begin{align*}   
\tr \mathring B^2=&|\nabla^2 A_3|^2 -\frac{(\Delta A_3)^2}{n},\no\\
\frac{f-x\cdot\nabla f}{\rho}+\frac{1}{2n}\tr B=&\Big(-\frac{1}{2}+\frac{3}{2n}\Big)\Delta_{\theta}A_3(\theta))r+O(r^2).
\end{align*}
 Hence, we obtain $\mathcal{R}=O(r^2)$ and $R_{\hat g}=O(r^6)$ by \eqref{R:concise} and \eqref{eqn:scalar_curv}. 
\end{proof}

\begin{lemma}\label{lem:R_integrability}
  Suppose we have $$R_g+4(n-1)H\frac{\eta}{\rho}+4n(n-1)\frac{\eta^2}{\rho^2}=c(\theta)|x|^k+o(|x|^k),$$ with $c(\theta)$ not identically zero. Then $R_{\hat g} \in L^1(\hat M, \hat g)$ if and only if $k>n-4$. 
\end{lemma}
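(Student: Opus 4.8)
The plan is to reduce the whole question to a one–dimensional radial integral via formula \eqref{eqn:RdV}, after observing that in the graph coordinates near $Q=0$ both the conformal factor $\rho$ and the volume element $dV_g$ are comparable to their flat counterparts. So first I would record the elementary expansions: since $f(0)=0$ and $\nabla f(0)=0$ we have $f(x)=O(|x|^2)$, hence
$$\rho=|x|^2+f^2(x)=|x|^2\bigl(1+O(|x|^2)\bigr),\qquad \rho^{2-n}=|x|^{4-2n}\bigl(1+O(|x|^2)\bigr),$$
while $\det g=1+|\nabla f|^2=1+O(|x|^2)$ gives $dV_g=\sqrt{\det g}\,dx=\bigl(1+O(|x|^2)\bigr)\,dx$. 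Substituting these together with the hypothesis into \eqref{eqn:RdV} yields, in a punctured neighbourhood of $Q$,
$$R_{\hat g}\,dV_{\hat g}=\bigl(c(\theta)+o(1)\bigr)\,|x|^{\,k+4-2n}\,\bigl(1+O(|x|^2)\bigr)\,dx,\qquad \theta=\tfrac{x}{|x|}\in\Sp^{n-1}.$$

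Next I would pass to polar coordinates $x=r\theta$, so $dx=r^{n-1}\,dr\,d\sigma(\theta)$; the power of $r$ appearing in the integrand of $\int|R_{\hat g}|\,dV_{\hat g}$ is then $(k+4-2n)+(n-1)=k+3-n$, and the integrability near $Q$ is equivalent to the convergence of $\int_0^{\varepsilon} r^{\,k+3-n}\,dr$, i.e.\ to $k+3-n>-1$, that is $k>n-4$. For the ``if'' direction this is immediate: $|c(\theta)|$ is bounded on $\Sp^{n-1}$ and the $o(1)$ and $O(|x|^2)$ factors are bounded near $Q$, so $|R_{\hat g}|\,dV_{\hat g}\le C\,|x|^{\,k+4-2n}\,dx$, and $\int_{|x|<\varepsilon}|x|^{\,k+4-2n}\,dx<\infty$ precisely when $k+4-2n>-n$, i.e.\ $k>n-4$.

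For the ``only if'' direction I would use that $c(\theta)$ is continuous (it arises from a Taylor expansion of smooth data) and not identically zero: there exist an open set $U\subset\Sp^{n-1}$ and $\delta>0$ with $|c(\theta)|\ge\delta$ on $U$. Then on the truncated cone $\{r\theta:\theta\in U,\ 0<r<\varepsilon\}$, with $\varepsilon$ small enough that the $o(1)$ term is $<\delta/2$ and the $O(|x|^2)$ factor is $\ge\tfrac12$, one gets $|R_{\hat g}|\,dV_{\hat g}\ge\tfrac{\delta}{4}\,|x|^{\,k+4-2n}\,dx$; integrating over this cone bounds the integral below by a positive multiple of $\int_0^{\varepsilon}r^{\,k+3-n}\,dr$, which diverges when $k\le n-4$. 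Hence $|R_{\hat g}|\,dV_{\hat g}\notin L^1$ near $Q$ in that range, completing the equivalence.

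The only point needing real care is the uniformity of the remainder in the hypothesis: one must know that the $o(|x|^k)$ is uniform in $\theta$, so that after factoring out $|x|^k$ it is a genuine $o(1)$ that can be absorbed in the estimates above, and likewise that $c$ does not vanish on a set of full measure on $\Sp^{n-1}$ — both hold in the intended applications. Everything else is the standard fact that $\int_{|x|<\varepsilon}|x|^{a}\,dx<\infty$ if and only if $a>-n$, applied with $a=k+4-2n$.
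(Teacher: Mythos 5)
Your proof is correct and follows essentially the same route as the paper's: both reduce via \eqref{eqn:RdV} and $\rho=|x|^2(1+o(1))$ to the integrability of $|x|^{k+4-2n}$ near the origin (giving $k>n-4$), and both establish the ``only if'' direction by the same cone argument, bounding $|c(\theta)|$ below on a spherical neighbourhood where it does not vanish. Your additional remark about the uniformity of the $o(|x|^k)$ remainder in $\theta$ is a fair point of care, but it does not change the argument.
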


\begin{proof}
   Since $\rho=|x|^2(1+o(1))$, if $k>n-4$, then we deduce from 
 $$\int_{B_\delta(0)}|x|^{2(2-n)+k} \ud x<\infty$$
 that $R_{\hat g}\in L^1(\hat M, \hat g)$.

 On the other hand, assume that $R_{\hat g}\in L^1(\hat M,\hat g)$ and $c(\theta_0)\neq 0$ for some $\theta_0$. Then we may choose a spherical neighborhood $U\subset \Sp^{n-1}$ of $\theta_0$, over which $|c(\theta)|$ has a positive lower bound $c_0>0$. Let $C_U$ be the corresponding cone in $\RR^n$ emanating from the origin. Then for small $\delta>0$, by \eqref{eqn:RdV} we have 
 $$\rho^{-n}|R_{\hat g}|\geq \frac{c_0}{2}|x|^{2(2-n)+k}, \quad \forall~ x\in B_\delta(0)\cap C_U.$$
 Consequently, we have $\int_{B_\delta(0)\cap C_U}|x|^{2(2-n)+k} \ud x<\infty$, and hence $2(2-n)+k>-n$. This means $k>n-4$. 
\end{proof}

 Let $\lambda \in \RR$ be the principal curvature of $M$ at $Q$, and define
\begin{align*}
\phi (x) =\frac{\lambda |x|^2}{1 + \sqrt{1 - \lambda^2 |x|^2}},
\quad x \in B_{\frac{1}{|\lambda|}(0)}.
\end{align*}
Then a direct calculation yields 
\begin{equation}\label{graph_fcn}
\frac{\lambda}{2}(|x|^2+\phi^2)=\phi, \quad 1+|\nabla \phi|^2=(1-\lambda^2 |x|^2)^{-1},\quad \phi-x \cdot \nabla \phi=-\frac{\phi}{\sqrt{1-\lambda^2 |x|^2}}.
\end{equation}
It is convenient to use $\phi(x)=\lambda^{-1}(1 - \sqrt{1 - \lambda^2 |x|^2})$ for $\lambda \neq 0$. Note that the graph of $\phi$ is precisely the osculating sphere of $M$ at $Q$.

\begin{proposition}\label{prop:vanishing_order_g}
If $R_{\hat{g}} \in L^1(\hat M,\hat{g})$, then near $Q$  we have
\begin{enumerate}[(i)]
\item when $4 \nmid n$, $f(x) = \phi (x) + O(|x|^{\lfloor \frac{n}{2} \rfloor + 1})$;
\item when $4 \mid n$, $f(x) = \phi(x) + \kappa |x|^{\frac{n}{2}} + O(|x|^{\frac{n}{2} + 1}),~\kappa \in \RR$.
\end{enumerate}
\end{proposition}

To simplify the computation, we use the following formula for $\mathcal{R}$:
\begin{equation}\label{concise_R}
\mathcal{R}=4n(n-1)\Big(\frac{f-x\cdot\nabla f}{\rho}+\frac{g^{\alpha\beta}f_{\alpha\beta}}{2n}\Big)^2+\frac{(g^{\alpha\beta}f_{\alpha\beta})^2}{n}-g^{\alpha\mu}g^{\beta\nu}f_{\alpha\beta}f_{\mu\nu}.
\end{equation}
As before, we denote by $B^\alpha_\nu:=g^{\alpha\beta}f_{\beta\nu}$ and its trace-free part $\mathring{B}^\alpha_\nu=B^\alpha_\nu-\frac{\tr B}{n}\delta^\alpha_\nu$. Then we have
$$\mathcal{R}=4n(n-1)\Big(\frac{f-x\cdot\nabla f}{\rho}+\frac{1}{2n}\tr B\Big)^2-\tr\Big( \mathring{B}^2\Big).$$

\begin{proof}[Proof of Proposition \ref{prop:vanishing_order_g}.]
Let $m = \lfloor \frac{n}{2} \rfloor$ be the integer part and $r=|x|$.  Recall that $f(0)=0, \nabla f(0)=0$. Suppose 
$$f(x)-\phi(x)=A_k(x)+O(r^{k+1})$$
for some $3 \leq k \leq m$, where $A_k$ is a homogeneous polynomial of degree $ k$, we shall prove by induction that $A_k=0$ except when $4|n$ and $k=m=\frac{n}{2}$. Short $A$ for $A_k$. Then we have 
$$
f(x) = \phi(x) + A(x) + O(r^{k+1})
$$
near $Q$.

Notice that
\begin{align*}
f - x \cdot \nabla f &= \phi  - x \cdot \nabla \phi + (A - x \cdot \nabla A ) + O(r^{k+1})  \\
&= -\frac{\phi}{\sqrt{1 - \lambda^2 r^2}} + (A - x \cdot \nabla A ) + O(r^{k+1})\\
&= -\frac{\phi}{\sqrt{1 - \lambda^2 r^2}} -(k-1)A + O(r^{k+1})
\end{align*}
and 
\begin{align*}
\rho = r^2 + f^2 &= r^2 + \phi^2 + 2\phi A + O(r^{k+3})\\
&= 2\lambda^{-1} \phi  +  2\phi A + O(r^{k+3}) \\
&= 2 \lambda^{-1} \phi \left(1 + \lambda A + O(r^{k+1})\right),
\end{align*}
thereby
$$
\rho^{-1} = \frac{\lambda}{2} \phi^{-1} \left(1 - \lambda A + O(r^{k+1})\right).
$$
Hence putting the above equations together yields
\begin{equation}\label{eta/rho}
\frac{f - x \cdot \nabla f}{\rho} = -\frac{\lambda}{2} \frac{1}{\sqrt{1 - \lambda^2 r^2}} -\frac{(k-1)\lambda}{2} \phi^{-1} A  + O(r^{k-1}).
\end{equation}

Observe that
\begin{align*}
1 + \vert \nabla f \vert^2 &= 1 + \vert \nabla \phi \vert^2 +O(r^{k}) = \frac{1}{1 - \lambda^2 r^2} \left( 1  + O(r^{k}) \right)
\end{align*}
and then
$$
(1 + \vert \nabla f \vert^2)^{-1} = (1 - \lambda^2 r^2)\left( 1 + O(r^{k}) \right).
$$
Now we have
\begin{align*}
g^{\alpha \beta} =& \delta^{\alpha \beta} - \frac{f_\alpha f_\beta}{1 + \vert \nabla f \vert^2} \\
=& \delta^{\alpha \beta} - \left( \frac{\lambda x_\alpha}{\sqrt{1 - \lambda^2 r^2}} + O(r^{k-1})\right)\left( \frac{\lambda x_\beta}{\sqrt{1 - \lambda^2 r^2}}  + O(r^{k-1})\right) \cdot (1 - \lambda^2 r^2)\left( 1 + O(r^{k}) \right) \\
=& \delta^{\alpha \beta} - \lambda^2 x_\alpha x_\beta   + O(r^{k}).
\end{align*}
This gives
\begin{align*}
B^\alpha_\nu=g^{\alpha \beta} f_{\beta \nu} 
=& \left( \delta^{\alpha \beta} - \lambda^2 x_\alpha x_\beta +O(r^k)\right) \cdot \left(\frac{\lambda \delta_{\beta \nu}}{\sqrt{1- \lambda^2 r^2}} + \frac{\lambda^3 x_\beta x_\nu}{(1- \lambda^2 r^2)^{\frac{3}{2}}} + A_{\beta \nu} +O(r^{k-1})\right) \\
=& \frac{\lambda \delta_{\alpha \nu}}{\sqrt{1- \lambda^2 r^2}} + \frac{\lambda^3 x_\alpha x_\nu}{(1- \lambda^2 r^2)^{\frac{3}{2}}} + A_{\alpha \nu}  - \frac{\lambda^3 x_\alpha x_\nu }{\sqrt{1- \lambda^2 r^2}} - \frac{\lambda^5 r^2 x_\alpha x_\nu}{(1- \lambda^2 r^2)^{\frac{3}{2}}} +O(r^{k-1})\\
={}& \frac{\lambda \delta_{\alpha \nu}}{\sqrt{1 - \lambda^2 r^2}} + A_{\alpha \nu}+ O(r^{k-1}).
\end{align*}
Hence,
\begin{align}\label{H}
\tr B=g^{\alpha \beta} f_{\alpha \beta}  ={}& \frac{n \lambda}{\sqrt{1 - \lambda^2 r^2}} + \Delta A  + O(r^{k-1})
\end{align}
and then
\begin{align}\label{trace-freeB}
\mathring{B}^\alpha_\nu =& A_{\alpha\nu}-\frac{\Delta A}{n}\delta_{\alpha\nu} + O(r^{k-1}).   
\end{align}
Notice that $\frac{\lambda}{2} \phi^{-1} = r^{-2} + O(1)$. Combining (\ref{eta/rho}) and (\ref{H}) yields
\begin{align}\label{H*eta/rho}
4n(n-1)\Big(\frac{f - x \cdot \nabla f}{\rho}+\frac{\tr B}{2n}\Big)^2&=4n(n-1)\Big(\frac{\Delta A}{2n}-\frac{(k-1)A}{r^2}+O(r^{k-1})\Big)^2\no\\ 
&= 4n(n-1)\Big(\frac{\Delta A}{2n}-\frac{(k-1)A}{r^2}\Big)^2+O(r^{2k-3}).
\end{align}
Moreover, by (\ref{trace-freeB}) we have
\begin{align}\label{trace-freeB^2}
\tr\Big(\mathring{B}^2\Big) 
&= |\nabla^2 A|^2-\frac{(\Delta A)^2}{n}+ O(r^{2k-3}).
\end{align} 
Therefore, combining (\ref{H*eta/rho}) and (\ref{trace-freeB^2}) we obtain
\begin{equation}\label{order_R:induction}
\mathcal{R} = 4n(n-1) (k-1)^2\frac{A^2}{r^4} -4(n-1)(k-1) \frac{A}{r^2} \Delta A + (\Delta A)^2 - \vert \nabla^2 A \vert^2 + O(r^{2k-3}).
\end{equation}
Since $\mathcal{R} = O(r^{n-3})$, when $k \leq m$ we have
\begin{equation}\label{A_k}
4n(n-1)(k-1)^2 A^2 - 4(n-1)(k-1) r^2 A \Delta A + r^4 \left[ (\Delta A)^2 - \vert \nabla^2 A \vert^2 \right] = 0.
\end{equation}

\begin{lemma}
    If $A=A_k$ satisfies \eqref{A_k} for $2k<n$, then $A\equiv 0$.
\end{lemma}

\begin{proof}
    In terms of polar coordinates, \eqref{A_k} is equivalent to
    \begin{align*}
        (n-1)(k-2)^2(n-2k)A^2(\theta)-2(k-2)(n-k-1)A(\theta)\Delta_\theta A(\theta)\\
        -2(k-1)^2|\nabla_\theta A(\theta)|^2+(\Delta_\theta A(\theta))^2-|\nabla^2_\theta A(\theta)|^2=0.
    \end{align*}
Integrating over $\Sp^{n-1}$ and using the Bochner identity on spheres, we obtain
$$(n-2k)\int_{\Sp^{n-1}}(n-1)(k-2)^2A^2(\theta) +(2k-3)|\nabla_\theta A(\theta)|^2 \ud\sigma=0,$$
from which we conclude that $A(\theta)\equiv 0$.
\end{proof}

When $2k=n$, we obtain
\begin{equation}
   -2(k-2)(k-1)A(\theta)\Delta_\theta A(\theta)\\
        -2(k-1)^2|\nabla_\theta A(\theta)|^2+(\Delta_\theta A(\theta))^2-|\nabla^2_\theta A(\theta)|^2=0.
\end{equation}

This in turn implies $r^2\mid A_k^2$. Furthermore, since $r^2$ is an irreducible polynomial, we obtain $r^2 \mid A_k$.

\begin{claim}
 If $r^{2l} \mid A_k$, then $r^{2l+2} \mid A_k$, where $2l \leq k$ and $4l \neq n$.
\end{claim}

To this end, we write $A_k (x) = r^{2l} p(x)$, where $p(x)$ is a homogeneous polynomial of degree $k-2l$. Then
\begin{align*}
\pa_{\alpha \beta}^2 A_k
=& 2l  r^{2l-2} p(x)\delta_{\alpha \beta}  + 4l(l-1) r^{2l-4}x_\alpha x_\beta p(x) \\
&+ 2l  r^{2l-2} (x_\alpha \pa_\beta p(x) + x_\beta \pa_\alpha p(x)) + r^{2l} \pa_{\alpha \beta}^2 p(x).
\end{align*}
So,
\begin{align*}
\frac{1}{r^{2l-2}} \Delta A_k =& 2nlp(x) + 4l(l-1) p(x) + 4l(k-2l)p(x) + r^2 \Delta p(x) \\
\equiv& 2l(n+2k-2l-2) p(x) \qquad \mod r^2
\end{align*}
and 
\begin{align*}
\frac{1}{r^{4l-4}} \vert \nabla^2 A_k \vert^2 
\equiv& 4nl^2 p^2(x) + 16 l^2(l-1)p^2(x) + 16l^2 p(x) x_\alpha \pa_\alpha p(x) \\
& + 16l^2(l-1)^2 p^2(x) + 32l^2 (l-1) p(x)x_\alpha \pa_\alpha p(x) \\
& + 8l(l-1) p(x) x_\alpha x_\beta \pa_{\alpha \beta}^2 p(x) + 8l^2 x_{\alpha} x_\beta \pa_\alpha p(x) \pa_\beta p(x) \qquad \mod r^2 \\
\equiv& \left[4l^2 (n+2k^2 - 4k -4l^2 + 4l) + 8l(l-1)(k-2l)(k-2l-1) \right] p^2(x) \mod r^2.
\end{align*}
Inserting these terms into (\ref{A_k}) gives
\begin{align}\label{cal:coeff_R}
0 =& 4n(n-1)(k-1)^2 \frac{A_k^2}{r^{4l}} - 4(n-1)(k-1)\frac{A_k}{r^{2l}} \frac{\Delta A_k}{r^{2l-2}} + \frac{1}{r^{4l-4}} \left[ (\Delta A_k)^2 - \vert \nabla^2 A_k \vert^2 \right] \no\\
\equiv& \big[ 4n(n-1)(k-1)^2 - 4(n-1)(k-1)\cdot 2l(n+2k-2l-2) + 4l^2 (n+2k-2l-2) \no\\
& -  4l^2 (n+2k^2 - 4k -4l^2 + 4l) - 8l(l-1)(k-2l)(k-2l-1)\big] p^2 (x) \quad \mod r^2 \no\\
\equiv& 4(k-l-1) \left[ (k-l-1) (n-4l)(n-1) + 2l(k-2l)\right] p^2 (x) \qquad \mod r^2.
\end{align}
Hence, when  $2l \leq k$ and $4l \neq n$, the above coefficient does not vanish, so $r^2 \mid p^2(x)$, thereby $r^2 \mid p(x)$. This directly yields $r^{2l+2} \mid A_k $. We finish the proof of the claim.

\medskip

When $k \leq m-1$, the above claim implies $r^{2(\lfloor \frac{k}{2} \rfloor + 1)} \mid A_k$, but $\deg A_k = k < 2(\lfloor \frac{k}{2} \rfloor + 1)$. This forces
$$
A_k(x) = 0, \quad k\leq m-1.
$$
Finally, our discussion is divided into two cases.
\begin{enumerate}[(i)]
\item If $4 \nmid n$, then $r^{2(\lfloor \frac{m}{2} \rfloor + 1)} \mid A_m$, also $\deg A_m = m < 2(\lfloor \frac{m}{2} \rfloor + 1) $, so $A_m (x) = 0$.
\item If $4 \mid n$, then $r^m \vert A_m$, so $A_m(x) = \kappa r^m$ for some constant $\kappa$.
\end{enumerate}
This completes the proof.
\end{proof}

\section{Adapted asymptotic coordinates and the ADM mass}\label{Sect:Asym_coor_mass}

Throughout this section,  we let $\lambda=\frac{H(Q)}{n}$ and fix $m = \lfloor \frac{n}{2} \rfloor$ as before.

 The following elementary lemma demonstrates that for all $n \geq 3$, there exists an \emph{adapted} asymptotic coordinate system for  $(M \backslash \{Q\},\hat g)$ such that the mass $m(\hat g)$ is well defined thanks to Proposition \ref{prop:vanishing_order_g}. 
\begin{lemma}\label{lem:asym_order-coor}
If $f(x) = \phi(x) + O(|x|^k)$ for some integer $k \geq 3$, then there is an adapted asymptotic coordinate system  
\begin{equation}\label{coor:asymp_natural}
y = \frac{\lambda}{2}\frac{x}{\phi(x)}=\frac{1+\sqrt{1-\lambda^2 |x|^2}}{2} \frac{x}{|x|^2}
\end{equation}
such that $(M\backslash \{Q\}, \hat g)$ is asymptotically flat of order $k$.
\end{lemma}
\begin{proof}

Let $r=|x|$ and $t=|y|$.
A direct computation together with \eqref{graph_fcn} yields
$$
\frac{\lambda^2}{4} + \vert y \vert^2 = \frac{\lambda^2}{4} + \frac{\lambda^2}{4}\frac{\vert x \vert^2}{\phi^2} = \frac{\lambda}{2} \frac{1}{\phi(x)}.
$$
This gives
$$\phi(x) = \frac{\lambda}{2}\frac{1}{\frac{\lambda^2}{4} + \vert y \vert^2}$$
and the inverse of the above asymptotic coordinate map is
\begin{equation}\label{inverse_coor:asymp_natural}
x = \frac{y}{\frac{\lambda^2}{4} + \vert y \vert^2}.
\end{equation}
In terms of the variable $y$, we can express 
$$
f(y) = \frac{\lambda}{2} \frac{1}{\frac{\lambda^2}{4} + \vert y \vert^2} + \sum_{i=k}^{2k-3} \widetilde{A}_i (y) + O(\vert y \vert^{2-2k}),
$$
where $\widetilde{A}_i (y)$ is a homogeneous polynomial  of degree $-i$. 

\begin{remark}\label{rem:derivative_decay}
Since $f$ is  smooth by assumption and the coordinate change is explicit, it is evident that the $O(\vert y \vert^{2-2k})$ term behaves nice when taking derivatives, and is in fact $O_l(\vert y \vert^{2-2k})$ for any $l$ by a direct mathematical induction argument. This fact will be frequently used later.
\end{remark}

Notice that
\begin{align*}
\rho &= r^2 + f^2 \\
&= \frac{t^2}{(t^2 + \frac{\lambda^2}{4})^2} + \frac{\lambda^2}{4} \frac{1}{(t^2 + \frac{\lambda^2}{4})^2}+ \frac{\lambda}{t^2 + \frac{\lambda^2}{4}} \sum_{i=k}^{2k-3} \widetilde{A}_i + O(t^{-2k}) \\
&= \frac{1}{t^2 + \frac{\lambda^2}{4}}\big(1+ \lambda \sum_{i=k}^{2k-3} \widetilde{A}_i + O(t^{2-2k})\big).
\end{align*}
So,
$$
\rho^{-2} = (t^2 + \frac{\lambda^2}{4})^2 \big(1 - 2\lambda \sum_{i=k}^{2k-3} \widetilde{A}_i + O(t^{2-2k})\big).
$$

In terms of the new variable $y$ we obtain
\begin{align*}
g=&\ud x_\alpha \otimes \ud x_\alpha + \ud f \otimes \ud f \\
=& \ud\left( \frac{y_\alpha}{t^2 +\frac{\lambda^2}{4}} \right)\otimes \ud\left( \frac{y_\alpha}{t^2 +\frac{\lambda^2}{4}} \right)\\
&+ \ud\left(\frac{\lambda}{2} \frac{1}{t^2 + \frac{\lambda^2}{4}} + \sum_{i=k}^{2k-3} \widetilde{A}_i (y) + O(t^{2-2k})\right) \otimes \ud\left(\frac{\lambda}{2} \frac{1}{t^2 + \frac{\lambda^2}{4}} + \sum_{i=k}^{2k-3} \widetilde{A}_i (y) + O(t^{2-2k})\right) \\
=& \left( \frac{\ud y_\alpha}{t^2 + \frac{\lambda^2}{4}} - \frac{2y_\alpha t\ud t}{(t^2 + \frac{\lambda^2}{4})^2} \right) \otimes \left( \frac{\ud y_\alpha}{t^2 + \frac{\lambda^2}{4}} - \frac{2y_\alpha t\ud t}{(t^2 + \frac{\lambda^2}{4})^2} \right) \\
& + \frac{\lambda^2}{4} \frac{4t^2 \ud t\otimes \ud t}{(t^2 + \frac{\lambda^2}{4})^4} - \frac{\lambda t}{(t^2 + \frac{\lambda^2}{4})^2} \ud t \otimes \ud \big(\sum_{i=k}^{2k-3} \widetilde{A}_i \big) \\
&- \frac{\lambda t}{(t^2 + \frac{\lambda^2}{4})^2} \ud \big(\sum_{i=k}^{2k-3} \widetilde{A}_i \big) \otimes \ud t + O(t^{-2k-2}) \\
=& (t^2 + \frac{\lambda^2}{4})^{-2} \bigg( \ud y_\alpha \otimes \ud y_\alpha - \lambda t\ud t \otimes \ud \big(\sum_{i=k}^{2k-3} \widetilde{A}_i \big) - \lambda t \ud \big(\sum_{i=k}^{2k-3} \widetilde{A}_i \big) \otimes \ud t + O(t^{2-2k}) \bigg).
\end{align*}
Then we have
\begin{align*}
\hat{g} =& \rho^{-2} (\ud x_\alpha \otimes \ud x_\alpha + \ud f \otimes \ud f)\\
=& \big(1 - 2\lambda \sum_{i=k}^{2k-3} \widetilde{A}_i + O(t^{2-2k})\big)\\
& \cdot\bigg( \ud y_\alpha \otimes \ud y_\alpha - \lambda t\ud t \otimes \ud \big(\sum_{i=k}^{2k-3} \widetilde{A}_i \big) - \lambda t \ud \big(\sum_{i=k}^{2k-3} \widetilde{A}_i \big) \otimes \ud t + O(t^{2-2k}) \bigg) \\
=& \bigg(1 - 2\lambda \sum_{i=k}^{2k-3} \widetilde{A}_i \bigg) \ud y_\alpha \otimes \ud y_\alpha - \lambda t\ud t \otimes \ud \big(\sum_{i=k}^{2k-3} \widetilde{A}_i \big) \\
&- \lambda t \ud \big(\sum_{i=k}^{2k-3} \widetilde{A}_i \big) \otimes \ud t + O(t^{2-2k}).
\end{align*}
This directly yields
$$
\hat{g}_{\alpha\beta} = \delta_{\alpha \beta} + \hat{h}_{\alpha \beta}
$$
with  $\hat{h}_{\alpha \beta} = O(t^{-k})$, $\pa \hat{h}_{\alpha \beta} = O(t^{-k-1})$ and $\pa^2 \hat{h}_{\alpha \beta} = O(t^{-k-2})$. Hence $y$ is an asymptotically flat coordinate system of order $k$.
\end{proof}

Now we are ready to prove Theorem \ref{thm:main_ADM_mass}.

\begin{proof}[Proof of Theorem \ref{thm:main_ADM_mass}.]
    
 By Proposition \ref{prop:vanishing_order_g}, we have $f(x) = \phi(x) + O(r^{m+1})$  when $4 \nmid n$.  By Lemma \ref{lem:asym_order-coor}, we know that $y$ is an asymptotic coordinate system of order $m+1$.

Since $m + 1 = \lfloor \frac{n}{2} \rfloor + 1 > \frac{n-2}{2}$, the ADM mass is well defined. Observe that
\begin{align*}
\sum_\alpha \hat{g}_{\alpha\alpha} =& n  \bigg(1 - 2\lambda \sum_{i=m+1}^{2m-1} \widetilde{A}_i \bigg) + 2\lambda  \sum_{i=m+1}^{2m-1} i\widetilde{A}_i + O(t^{-2m}),\\
\hat{g}_{tt} =& \sum_{\alpha, \beta} \hat{g}_{\alpha\beta}y_\alpha y_\beta t^{-2} \\
=&  \bigg(1 - 2\lambda \sum_{i=m+1}^{2m-1} \widetilde{A}_i \bigg) + 2\lambda  \sum_{i=m+1}^{2m-1} i\widetilde{A}_i + O(t^{-2m}).
\end{align*}
Subsequently, in view of Remark \ref{rem:derivative_decay},
\begin{align*}
\pa_t (\hat{g}_{tt} - \sum_\alpha \hat{g}_{\alpha\alpha}) &= -(n-1) \pa_t\bigg(1 - 2\lambda \sum_{i=m+1}^{2m-1} \widetilde{A}_i \bigg)+ O(t^{-2m-1}) \\
&= -2(n-1)\lambda t^{-1}\sum_{i=m+1}^{2m-1} i\widetilde{A}_i + O(t^{-2m-1}), \\
\frac{1}{t}(n\hat{g}_{tt} - \sum_\alpha \hat{g}_{\alpha\alpha}) &= 2(n-1)\lambda t^{-1}\sum_{i=m+1}^{2m-1} i\widetilde{A}_i + O(t^{-2m-1}).
\end{align*}
Hence we obtain
\begin{align*}
m(\hat{g}) =& \frac{1}{|\Sp^{n-1}|}\lim_{t \to \infty} \int_{\vert y \vert = t}  \pa_t (\hat{g}_{tt} - \sum_\alpha \hat{g}_{\alpha\alpha}) + \frac{1}{t}(n\hat{g}_{tt} - \sum_\alpha \hat{g}_{\alpha\alpha})  \ud \sigma \\
=&\frac{1}{|\Sp^{n-1}|} \lim_{t \to \infty} \int_{\vert y \vert = t} O(t^{-2m-1}) \ud\sigma \\
=& \frac{1}{|\Sp^{n-1}|}\lim_{t \to \infty} \int_{\mathbb{S}^{n-1}} O(t^{n-2m-2}) \ud \sigma =0.
\end{align*}
For the $4\mid n$ case, we have the following lemma, which concludes the proof.
\end{proof}

\begin{lemma}
When $n = 4k, k \geq 2$, there hold 
$$
f(x) = \phi (x) + \kappa |x|^{\frac{n}{2}}+O(|x|^{\frac{n}{2}+1}), \qquad \kappa \in \RR,
$$
near $Q$, and under the asymptotic coordinates $y=\frac{\lambda}{2}\phi^{-1}(x)x$, $(M \backslash \{Q\},\hat g)$ is asymptotically flat of order $\frac{n}{2}$. Moreover, the mass of $\hat g$ is $m(\hat g)=\frac{(n-1)(n-4)^2}{4} \kappa^2$.
\end{lemma}
\begin{proof}
In terms of the variable $y$ we have
\begin{align*}
f(y) &= \frac{\lambda}{2}\frac{1}{t^2 + \frac{\lambda^2}{4}} + \frac{\kappa t^{\frac{n}{2}}}{(t^2 + \frac{\lambda^2}{4})^{\frac{n}{2}}}  \\
&= \frac{\lambda}{2}\frac{1}{t^2 + \frac{\lambda^2}{4}} + \kappa t^{-\frac{n}{2}} + \sum_{i=\frac{n}{2}+1}^{\infty} a_i t^{-i}, \quad a_i \in \RR.
\end{align*}
So,
\begin{align*}
\rho &= r^2 + f^2 \\
&= \frac{t^2}{(t^2 + \frac{\lambda^2}{4})^2} + \frac{\lambda^2}{4}\frac{1}{(t^2 + \frac{\lambda^2}{4})^2} + \frac{\kappa \lambda t^{-\frac{n}{2}}}{t^2 + \frac{\lambda^2}{4}} + \frac{\lambda}{t^2 + \frac{\lambda^2}{4}} \sum_{i=\frac{n}{2}+1}^{n-2} a_i t^{-i} + \kappa^2 t^{-n} + O(t^{-n-1}) \\
&= \frac{1}{t^2 + \frac{\lambda^2}{4}} \big[ 1 + \kappa \lambda t^{-\frac{n}{2}} + \lambda \sum_{i = \frac{n}{2}+1}^{n-2} a_i t^{-i} + \kappa^2 t^{2-n} + O(t^{1-n})  \big]
\end{align*}
and then
$$
\rho^{-2} = (t^2 + \frac{\lambda^2}{4})^2 \big( 1 - 2 \kappa \lambda t^{-\frac{n}{2}} -2 \lambda \sum_{i = \frac{n}{2}+1}^{n-2} a_i t^{-i} -2 \kappa^2 t^{2-n} + O(t^{1-n})  \big).
$$

Similarly as before, we have
\begin{align*}
g=&\ud x_\alpha \otimes \ud x_\alpha + \ud f \otimes \ud f \\
=& (t^2 + \frac{\lambda^2}{4})^{-2} \ud y_\alpha \otimes \ud y_\alpha - \frac{\lambda t}{(t^2 + \frac{\lambda^2}{4})^2}\ud t \otimes \ud \big( \kappa t^{-\frac{n}{2}} + \sum_{i=\frac{n}{2}+1}^{\infty} a_i t^{-i} \big) \\
&- \frac{\lambda t}{(t^2 + \frac{\lambda^2}{4})^2} \ud \big( \kappa t^{-\frac{n}{2}} + \sum_{i=\frac{n}{2}+1}^{\infty} a_i t^{-i} \big) \otimes \ud t \\
& + \ud \big( \kappa t^{-\frac{n}{2}} + \sum_{i=\frac{n}{2}+1}^{\infty} a_i t^{-i} \big) \otimes \ud \big( \kappa t^{-\frac{n}{2}} + \sum_{i=\frac{n}{2}+1}^{\infty} a_i t^{-i} \big) +O(t^{-n-3}) \\
=& (t^2 + \frac{\lambda^2}{4})^{-2} \ud y_\alpha \otimes \ud y_\alpha - \frac{\lambda t}{(t^2 + \frac{\lambda^2}{4})^2}\ud t \otimes \ud \big( \kappa t^{-\frac{n}{2}} + \sum_{i=\frac{n}{2}+1}^{n-2} a_i t^{-i} \big) \\
&- \frac{\lambda t}{(t^2 + \frac{\lambda^2}{4})^2} \ud \big( \kappa t^{-\frac{n}{2}} + \sum_{i=\frac{n}{2}+1}^{n-2} a_i t^{-i} \big) \otimes \ud t + \frac{n^2}{4} \kappa^2  t^{-n-2} \ud t \otimes \ud t + O(t^{-n-3}).
\end{align*}
Then,
\begin{align*}
\hat{g} =& \rho^{-2}(\ud x_\alpha \otimes \ud x_\alpha + \ud f \otimes \ud f) \\
=& \big( 1 - 2 \kappa \lambda t^{-\frac{n}{2}} -2 \lambda \sum_{i = \frac{n}{2}+1}^{n-2} a_i t^{-i} -2 \kappa^2 t^{2-n}  \big) \ud y_\alpha \otimes \ud y_\alpha \\
&- \lambda t \ud t \otimes  \ud \big( \kappa t^{-\frac{n}{2}} + \sum_{i=\frac{n}{2}+1}^{n-2} a_i t^{-i} \big) - \lambda t  \ud \big( \kappa t^{-\frac{n}{2}} + \sum_{i=\frac{n}{2}+1}^{n-2} a_i t^{-i} \big) \otimes \ud t \\
&+ \frac{n^2}{4} \kappa^2  t^{2-n} \ud t \otimes \ud t + O(t^{1-n}),
\end{align*}
equivalently,
\begin{align*}
\hat{g}_{\alpha\beta} =& \delta_{\alpha\beta} \big( 1 - 2 \kappa \lambda t^{-\frac{n}{2}} -2 \lambda \sum_{i = \frac{n}{2}+1}^{n-2} a_i t^{-i} -2 \kappa^2 t^{2-n}  \big) \\
&-\lambda (y_\alpha \pa_\beta + y_\beta \pa_\alpha)\big( \kappa t^{-\frac{n}{2}} + \sum_{i=\frac{n}{2}+1}^{n-2} a_i t^{-i} \big)  +  \frac{n^2}{4} \kappa^2  t^{-n} y_\alpha y_\beta + O(t^{1-n}).
\end{align*}
This indicates that $y$ is an asymptotically flat coordinate system of order $\frac{n}{2}$. So the mass is well defined.

Observe that
\begin{align*}
\sum_{\alpha} \hat{g}_{\alpha \alpha} =& n\big( 1 - 2 \kappa \lambda t^{-\frac{n}{2}} -2 \lambda \sum_{i = \frac{n}{2}+1}^{n-2} a_i t^{-i} -2 \kappa^2 t^{2-n}  \big) \\
& - 2 \lambda t\pa_t\big( \kappa t^{-\frac{n}{2}} + \sum_{i=\frac{n}{2}+1}^{n-2} a_i t^{-i} \big) +  \frac{n^2}{4} \kappa^2  t^{2-n}  + O(t^{1-n}) 
\end{align*}
and
\begin{align*}
\hat{g}_{tt} =& \big( 1 - 2 \kappa \lambda t^{-\frac{n}{2}} -2 \lambda \sum_{i = \frac{n}{2}+1}^{n-2} a_i t^{-i} -2 \kappa^2 t^{2-n}  \big) \\
& - 2 \lambda t\pa_t\big( \kappa t^{-\frac{n}{2}} + \sum_{i=\frac{n}{2}+1}^{n-2} a_i t^{-i} \big) + \frac{n^2}{4} \kappa^2  t^{2-n}  + O(t^{1-n}).
\end{align*}
Combining these terms above yields, as before,
\begin{align*}
\pa_t (\hat{g}_{tt} - \sum_\alpha \hat{g}_{\alpha\alpha}) =& -(n-1) \pa_t \big(1 - 2 \kappa \lambda t^{-\frac{n}{2}} -2 \lambda \sum_{i = \frac{n}{2}+1}^{n-2} a_i t^{-i} -2 \kappa^2 t^{2-n} + O(t^{1-n})\big) \\
=& 2\lambda(n-1) \pa_t (\kappa t^{-\frac{n}{2}} + \sum_{i = \frac{n}{2}+1}^{n-2} a_i t^{-i}) + 2(n-1)(2-n)\kappa^2 t^{1-n} + O(t^{-n}), \\
\frac{1}{t}(n\hat{g}_{tt} - \sum_\alpha \hat{g}_{\alpha\alpha}) &= -2 \lambda (n-1) \pa_t\big( \kappa t^{-\frac{n}{2}} + \sum_{i=\frac{n}{2}+1}^{n-2} a_i t^{-i} \big) + (n-1) \frac{n^2}{4} \kappa^2  t^{1-n}  + O(t^{-n}).
\end{align*}
This leads to
\begin{align*}
&\pa_t (\hat{g}_{tt} - \sum_\alpha \hat{g}_{\alpha\alpha}) + \frac{1}{t}(n\hat{g}_{tt} - \sum_\alpha \hat{g}_{\alpha\alpha})  \\
=& 2(n-1)(2-n)\kappa^2 t^{1-n} + (n-1)\frac{n^2}{4} \kappa^2  t^{1-n}  + O(t^{-n})  \\
=& \frac{(n-1)(n-4)^2}{4} \kappa^2 t^{1-n} + O(t^{-n}).
\end{align*}
Consequently, we obtain
\begin{align*}
m(\hat{g}) =& \frac{1}{|\Sp^{n-1}|}\lim_{t \to \infty} \int_{\vert y \vert = t}  \pa_t (\hat{g}_{tt} - \sum_\alpha \hat{g}_{\alpha\alpha}) + \frac{1}{t}(n\hat{g}_{tt} - \sum_\alpha \hat{g}_{\alpha\alpha})  \ud \sigma \\
=& \frac{1}{|\Sp^{n-1}|} \lim_{t \to \infty} \int_{\mathbb{S}^{n-1}} \frac{(n-1)(n-4)^2}{4} \kappa^2+O(t^{-1}) \ud \sigma \\
=& \frac{(n-1)(n-4)^2}{4} \kappa^2.
\end{align*}
\end{proof}

\begin{example}\label{example}
When $n=4k, k \geq 2$,  the vanishing order of $f-\phi$ in Proposition \ref{prop:vanishing_order_g} cannot be improved. For example,  locally we may take
$$f(x)=\phi(x)+\kappa |x|^{2k}+|x|^{2k} l(x)+O(|x|^{2k+2}),$$ 
where $\kappa \in \RR\setminus \{0\}$ and $l(x)$ is any linear function, and  extend this local graph to a closed hypersurface smoothly. Then we always have $R_{\hat g}=O(r^{n+2})$, that is,
$$\mathcal{R} = O(r^{n-2})\quad \Rightarrow \quad  m(\hat g)=\frac{(n-1)(n-4)^2}{4} \kappa^2>0.$$
Also, even if we impose a stronger condition that $R_{\hat{g}}=0$ in a punctured neighborhood of the umbilical point $Q$ and $f$ is real analytic,  it fails to obtain the vanishing of $\kappa$.
Since the above computations are similar to those of Proposition \ref{prop:vanishing_order_g}, we leave the details to interested readers.  
\end{example}

The above example demonstrates that in dimensions $n=4k+2, k\geq 2$, a global analysis is crucial to the resolution of \emph{\textbf{GFRC}},  which occupies the whole Section \ref{Sect4}.

\section{A new mass formula and GFRC for $P_2^g$}\label{Sect4}

Recall that locally $M$ is a graph of a smooth function $f:B_\delta(0)\to\RR$ near $Q$. Without loss of generality, we assume that $Q$ is the origin of $\RR^{n+1}$. Under the inversion $I_Q$ as in \eqref{def:inversion}, the inverted hypersurface becomes
$$\left\{\Big(\frac{x}{|x|^2+f(x)^2},\frac{f(x)}{|x|^2+f(x)^2}\Big);~\ x\in B_\delta(0)\subset\RR^n\right\}.$$
Now we introduce a new coordinate change by
\begin{equation}\label{eqn:AsymCord_new}
   y=\frac{x}{|x|^2+f(x)^2}.
\end{equation}
When $|x|\ll 1$, this gives an asymptotic coordinate system for the end of $I_Q(M\backslash\{Q\})$. Combining the fact that $f(x)=\phi(x)+O(|x|^{\frac{n}{2}})$ by Proposition \ref{prop:vanishing_order_g} and \eqref{graph_fcn} we obtain
$$\frac{\lambda}{2}(|x|^2+f^2(x))=\frac{\lambda}{2}(|x|^2+\phi^2(x))+O(|x|^{\frac{n}{2}+2})=\phi(x)+O(|x|^{\frac{n}{2}+2}).$$
So this new coordinate change is very close to that of Section \ref{Sect:Asym_coor_mass}. Using this coordinate chart $y$, the end of $I_Q(M\backslash\{Q\})$ becomes a graph of the function 
$$\psi(y)=\frac{f(x)}{|x|^2+f^2(x)}.$$
Here the coordinates $x$ and $y$ are related by the formula \eqref{eqn:AsymCord_new}.
Then under the coordinates $y$, ${\hat{g}} = \rho^{-2} g=|\ud y|^2+\ud \psi \otimes \ud \psi$ is the induced metric, thereby  $\hat{g}_{\alpha \beta} = \delta_{\alpha \beta} + \psi_\alpha \psi_\beta $.

Note that
$$\psi(y)=\frac{\frac{\lambda}{2}\phi(x)+O(|x|^{\frac{n}{2}})}{\phi(x)+O(|x|^{\frac{n}{2}+2})}=\frac{\lambda}{2}+O(|x|^{\frac{n}{2}-2})=\frac{\lambda}{2}+O(|y|^{-\frac{n}{2}+2}).$$
It is direct to check that
$$|\partial \psi|+|y|\cdot|\partial^2 \psi|+|y|^2\cdot|\partial^3 \psi|=O(|y|^{-\frac{n}{2}+1}).$$
As in Lam's work \cite{Lam}, we have
\begin{align*}
m(\hat{g}) &= \frac{1}{|\Sp^{n-1}|} \lim_{t \to \infty} \int_{\Sp^{n-1}_t} (\partial_\alpha \hat{g}_{\alpha\beta} -\partial_\beta \hat{g}_{\alpha\alpha}) \nu^\beta \ud \sigma_{\Sp_t^{n-1}} \\
&= \frac{1}{|\Sp^{n-1}|} \lim_{t \to \infty} \int_{\Sp^{n-1}_t} (\psi_{\alpha \alpha} \psi_\beta - \psi_{\alpha \beta} \psi_\alpha)  \nu^\beta \ud \sigma_{\Sp_t^{n-1}} .
\end{align*}
However, since $I_Q(M\backslash\{Q\})$ may not be an entire graph, it seems impossible to write this directly as a global integration as in \cite{Lam}. To overcome this difficulty, we discover a new mass formula that works for a general asymptotically flat hypersurface, which is a graph only at its end. 

The key observation is that the vector field 
$$\frac{1}{1+|\nabla\psi|^2}(\psi_{\alpha\alpha}\psi_\beta-\psi_{\alpha\beta}\psi_\alpha)\frac{\partial}{\partial y_\beta}$$
is in fact globally defined on the hypersurface, although it is not necessarily an entire graph, and its divergence equals a smooth function multiple of the scalar curvature. This can be verified directly using the fact that the hypersurface can be realized as a level set of some smooth function $\mathcal F$ on $\RR^{n+1}$, and each term above can be expressed by $\mathcal F$ and its derivatives. 
Instead, we  switch to the following simpler and more geometric approach here.

We start with an elementary lemma regarding hypersurfaces. 

\begin{proposition}\label{prop:useful_formula}
Let $M\subset \RR^{n+1}$ be an oriented hypersurface with a unit normal vector field $\bf n$. Let $\bf e$ be a constant vector field on $\RR^{n+1}$ and let $v:=\bold e^\top$ be its tangential part along $M$. For a symmetric 2-tensor $T$, consider $X:=(i_v T)^\sharp$ , the dual of the 1-form $i_vT$ or raising index of $i_vT$, then  
$$\div_g X=(\div_g T)(v)+\langle\bold e,\bold n\rangle\langle II, T\rangle.$$
In particular, if $\div_g T=0$, then 
$$\div_g X=\langle\bold e,\bold n\rangle\langle II, T\rangle.$$
\end{proposition}

\begin{proof}
Let $\{e_\alpha\}_{\alpha=1}^n$ be a local orthonormal frame along $M$. First we compute $\nabla^g_{e_\alpha} v$:
\begin{align*}
    \nabla^g_{e_\alpha} v &=(\nabla_{e_\alpha} v)^\top\\
    &= \Big(\nabla_{e_\alpha}(\bold e-\langle\bold e,\bold n\rangle \bold n)\Big)^\top\\
    &=\Big(-e_\alpha\langle\bold e,\bold n\rangle \bold n-\langle\bold e,\bold n\rangle \nabla_{e_\alpha}\bold n\Big)^\top\\
    &=\langle\bold e,\bold n\rangle h_{\alpha\beta}e_\beta.
\end{align*}
 This follows that\footnote{Similar computations also appear in  \cite[Proposition 2.6]{WWX} for Newton transforms.} 
\begin{align*}
    \div_g X&=\sum_\alpha \langle\nabla^g_{e_\alpha} X, e_\alpha\rangle\\
    &=\sum_\alpha \Big(\nabla^g_{e_\alpha}(i_v T) \Big)(e_\alpha)\\
    &=\sum_\alpha \Big(e_\alpha T(v,e_\alpha)-T(v,\nabla^g_{e_\alpha} e_\alpha)\Big)\\
    &=\sum_\alpha \Big((\nabla^g_{e_\alpha} T)(v,e_\alpha)+T(\nabla^g_{e_\alpha} v,e_\alpha)\Big)\\
    &=(\div_g T)(v)+\langle\bold e,\bold n\rangle\sum_{\alpha,\beta} h_{\alpha\beta}T(e_\beta,e_\alpha)\\
    &=(\div_g T)(v)+\langle\bold e,\bold n\rangle\langle II, T\rangle.
\end{align*}  
\end{proof}

Now we are ready to prove Theorem \ref{thm:main_massFormula}.

\begin{proof}[Proof of Theorem \ref{thm:main_massFormula}.]
Recall that $N$ is a complete non-compact hypersurface in $\RR^{n+1}$ with induced metric $g$. And there is a compact subset $K\subset N$ such that $N\setminus K$ is the graph of a smooth function $\psi: \RR^n\setminus B_R(0)\to\RR$, satisfying   $$|\partial_\alpha\psi|+|y||\partial^2_{\alpha\beta}\psi|+|y|^2|\partial^3_{\alpha\beta\gamma}\psi|=O(|y|^{-\frac{\tau}{2}})$$
    with $\tau>
    \frac{n-2}{2}$. 
    By the Codazzi equation, $II-Hg$ is divergence-free\footnote{This is essentially the first order Newton transform for the hypersurface.}. From now on, we choose $\bold e=e_{n+1}$, $T=H g-II$ such that
$$\div_{ g} X=\langle\bold e,\bold n\rangle (H^2-|II|^2)=\langle\bold e,\bold n\rangle R_{ g}.$$

\begin{claim}
   We have
$$i_X \ud V_{g}=X \lrcorner \ud V_g =\frac{1}{1+|\nabla\psi|^2}(\psi_{\alpha \alpha} \psi_\beta - \psi_{\alpha \beta} \psi_\alpha)  \nu^\beta \ud \sigma_{\Sp_t^{n-1}}.$$  
\end{claim}

In fact, along $N\setminus K$, we identify $\frac{\partial}{\partial y_\alpha}$ with $e_\alpha+\psi_\alpha e_{n+1}$. So we have
$$\bold e^\top=e_{n+1}-\frac{(-\nabla\psi, 1)}{1+|\nabla\psi|^2}=\frac{\psi_\alpha e_\alpha+|\nabla\psi|^2 e_{n+1}}{1+|\nabla\psi|^2}=\frac{\psi_\alpha}{1+|\nabla\psi|^2}\frac{\partial}{\partial y_\alpha}.$$
Then we obtain
\begin{align*}
   \Big(i_{\bold e^\top} II\Big)^\sharp &=\Big(\frac{\psi_{\alpha\beta}\psi_\alpha \ud y_\beta}{(1+|\nabla\psi|^2)^{\frac{3}{2}}}\Big)^\sharp\\
   &=\Big(\delta_{\gamma\beta}-\frac{\psi_\gamma\psi_\beta}{1+|\nabla\psi|^2}\Big)\frac{\psi_{\alpha\beta}\psi_\alpha}{(1+|\nabla\psi|^2)^{\frac{3}{2}}}\frac{\partial}{\partial y_\gamma}\\
   &=\Big(\frac{\psi_{\alpha\gamma}\psi_\alpha}{(1+|\nabla\psi|^2)^{\frac{3}{2}}}-\frac{\psi_\gamma\psi_{\alpha\beta}\psi_\alpha\psi_\beta}{(1+|\nabla\psi|^2)^{\frac{5}{2}}}\Big)\frac{\partial}{\partial y_\gamma}
\end{align*}
and
\begin{align*}
   \Big(i_{\bold e^\top}(Hg) \Big)^\sharp &=\Big(\frac{H\psi_\alpha}{1+|\nabla\psi|^2}(\delta_{\alpha\beta}+\psi_\alpha\psi_\beta)\ud y_\beta\Big)^\sharp=(H\psi_\beta \ud y_\beta)^\sharp\\
   &=H\psi_\beta\Big(\delta_{\gamma\beta}-\frac{\psi_\gamma\psi_\beta}{1+|\nabla\psi|^2}\Big)\frac{\partial}{\partial y_\gamma}=\frac{H\psi_\gamma}{1+|\nabla\psi|^2}\frac{\partial}{\partial y_\gamma}\\
   &=\frac{\psi_\gamma}{1+|\nabla\psi|^2}\Big(\frac{\Delta\psi}{\sqrt{1+|\nabla\psi|^2}}-\frac{\psi_{\mu\nu}\psi_\mu\psi_\nu}{(1+|\nabla\psi|^2)^{\frac{3}{2}}}\Big)\frac{\partial}{\partial y_\gamma}.
\end{align*}

Consequently, 
$$X=\Big(i_{\bold e^\top}(Hg-II)\Big)^\sharp=\frac{1}{(1+|\nabla\psi|^2)^{\frac{3}{2}}}(\psi_{\alpha\alpha}\psi_\beta-\psi_{\alpha\beta}\psi_\alpha)\frac{\partial}{\partial y_\beta}.$$
Thus, we also have
$$i_X \ud V_g=i_X\Big(\sqrt{1+|\nabla\psi|^2} \ud V_{\RR^n}\Big)=\frac{1}{1+|\nabla\psi|^2}(\psi_{\alpha \alpha} \psi_\beta - \psi_{\alpha \beta} \psi_\alpha)  \nu^\beta \ud \sigma_{\Sp_t^{n-1}}.$$

Using this and Proposition \ref{prop:useful_formula} we conclude that
\begin{align*}
m(g) &= \frac{1}{|\Sp^{n-1}|} \lim_{t \to \infty} \int_{\Sp^{n-1}_t} (\psi_{\alpha \alpha} \psi_\beta - \psi_{\alpha \beta} \psi_\alpha)  \nu^\beta \ud \sigma_{\Sp_t^{n-1}}  \\
&= \frac{1}{|\Sp^{n-1}|} \lim_{t \to \infty} \int_{\Sp^{n-1}_t} X \lrcorner \ud V_g \\
&= \frac{1}{|\Sp^{n-1}|}\int_{N} \div_g (X) \ud V_g \\
&=\frac{1}{|\Sp^{n-1}|}\int_{N}\langle e_{n+1},\bold n\rangle R_g \ud V_g.
\end{align*}
This completes the proof of Theorem \ref{thm:main_massFormula}.\end{proof}

As a direct application of Theorem \ref{thm:main_massFormula}, we give a unified proof of $m(\hat g)=0$ when $n\geq 3$, including the exceptional case $n=4k, k\geq 2$.

Now we can prove Theorem \ref{thm:main_conf_Laplace}.

\begin{proof}[Proof of Theorem \ref{thm:main_conf_Laplace}.]
Since we now have Theorems \ref{thm:main_ADM_mass} and \ref{thm:main_massFormula}, the remaining part of the proof is identical to that of \cite[Theorem 4]{Chen-Shi}. So we only outline the proof and refer the interested readers to \cite{Chen-Shi} for details.

As in \cite[Lemma 5]{Chen-Shi}, we have already proved that if there is a point $Q\in M$ such that the Green function for the conformal Laplacian has the form of $G(P,Q)=\frac{1}{(n-2)|\Sp^{n-1}|}\|P-Q\|^{2-n},\forall~ P\in M\setminus\{Q\}$, then $Q$ is umbilical. Also, the Green function for the conformal Laplacian implies that $R_{\hat g}\equiv 0$. Then we deduce from Theorems \ref{thm:main_ADM_mass} and \ref{thm:main_massFormula} that the mass of $\hat g$ is zero. Since our manifold has an induced spin structure from $\RR^{n+1}$, we can apply the spin version of the Positive Mass Theorem in \cite{bartnik} to conclude that $(M\setminus\{Q\},\hat g)$ is isometric to the flat $\RR^n$. Then the theorem follows from the following lemma, which is contained in the proof  in \cite{Chen-Shi} of \emph{\textbf{GFRC}} when $n=3,4,5$. 
\end{proof}

\begin{lemma}\label{lem:finalStep}
Let $M\subset\RR^{n+1}$ be an embedded closed hypersurface, and $Q\in M$ is an umbilical point. Let $I_Q$ be the inversion of $\RR^{n+1}$ with center $Q$ as in \eqref{def:inversion}. If $I_Q(M\setminus \{Q\})$ with induced metric $\hat g$ is isometric to the flat space form $\RR^n$, then $M$ is a round sphere.   
\end{lemma}

For the convenience of readers, we include a heuristic and sketched proof.

\begin{proof}
Regarding the isometry as the identity map and $Q$ as the origin, we obtain a smooth embedding $F: \RR^n \to \RR^{n+1}$ such that
$$g=F^*g_{\RR^{n+1}}=|F|^4 g_{\RR^n} \qquad \mathrm{~~in}\quad \RR^n$$
with the normalization that $\lim_{|y|\to\infty}F(y)=0$. Consider an inverted hypersurface $U$
\begin{align*}
w:=U(y)=I_Q \circ F(y), \qquad y \in \RR^n.
\end{align*}
A nice observation
$$U^\ast g_{\RR^{n+1}}=F^\ast I_Q^\ast g_{\RR^{n+1}}=F^\ast(|w|^{-4} |\ud w|^2)=|F(y)|^{-4} g=|\ud y|^2$$
implies that $U$ is a complete flat hypersurface in $\RR^{n+1}$.

Using Hartman-Nirenberg's characterization in \cite{Hartman-Nirenberg} of complete flat hypersurfaces in $\RR^{n+1}$ , we know that $(I_Q(M\setminus\{Q\}),\hat g)$ is a generalized cylinder in $\RR^{n+1}$ and can be parametrized as
$$U(t,z)=(x(t),y(t),z), \qquad t \in \RR,~~z \in \RR^{n-1},$$
where $t \mapsto (x(t),y(t))$ is a regular plane curve parametrized by arc length $t$. Consequently, $M$ can be parametrized by
$$F(t,z)=\frac{(x(t),y(t),z)}{x(t)^2+y(t)^2+|z|^2}.$$
Using this special parametrization of $M$, we can compute the principal curvatures of $M$ in $\RR^{n+1}$. It turns out that there are only two possible values for the principal curvatures at any point. The first one, $$\lambda=-2(xy'-x'y)$$ is a principal curvature of multiplicity at least $n-1$, and the other one is
$$\mu=\lambda-k(t)(x^2+y^2+|z|^2),$$ where $k(t)$ is the relative curvature of the plane curve $(x(t),y(t))$.

Since $Q$ is umbilical, $\lambda-\mu$ tends to zero when the point approaches $Q$. This forces $k(t)$ to vanish everywhere and hence $I_Q(M\setminus\{Q\})\subset \RR^{n+1}$ must be a hyperplane. As a consequence, $M$ is itself a round sphere.
\end{proof}

\section{Positive energy theorem and GFRC for $P_4^g$}\label{Sect:Paneitz}

Now we turn to the \emph{\textbf{GFRC}}  for Paneitz operator $P_4^g$. Similar to the conformal Laplacian case, the main tool is the positive energy theorem for $P_4^g$.

For $n \geq 5$, let $(N^n,\hat g)$ be an asymptotically flat manifold  of order $\tau>\frac{n-4}{2}$ and regularity order 4. If $Q_{\hat g} \in L^1(N,\hat g)$, then a delicate calculation together with the definition \eqref{def:Q-curv} of $Q$-curvature yields (cf. \cite[p.8]{ALL})
\begin{equation}\label{Q-curv:AF}
Q_{\hat g}=-\frac{1}{2(n-1)}\Delta_{\hat g} R_{\hat g}+O(t^{-2\tau-4})
\end{equation}
under asymptotic coordinates $y$ with $t=|y|$, and thus $\mathsf{m}_4(\hat g)$ is well defined. Notice that
$$\nu_{\hat g}=\frac{{\hat g}^{\alpha \beta}y_\beta \pa_\alpha}{\|y\|_{\hat g}}\qquad \mathrm{~~with~~}\quad \|y\|_{\hat g}:=\sqrt{{\hat g}^{\alpha \beta}y_\alpha y_\beta}$$ 
is the outward unit normal vector field on $\Sp_t^{n-1}$ with respect to $\hat g$, thereby
$$|\nu_{\hat g} -\nu|=O(t^{-\tau}).$$
Based on these facts the authors \cite[Proposition 2]{ALL} established another equivalent formula of geometric flavor:
\begin{align*}
\mathsf{m}_4(\hat g)=-\frac{1}{|\Sp^{n-1}|}\lim_{t \to \infty} \int_{\Sp_t^{n-1}}\pa_t R_{\hat g} \ud \sigma_{\Sp_t^{n-1}}.
\end{align*}
In our setting, we prefer to use the latter formula.

To continue, we introduce the Yamabe constant of an AF manifold $(N^n, g), n\geq 3$ by
\begin{align*}
Y(N,[\hat g]):= \inf_{ u \in C_c^\infty(N)\backslash\{0\}}\frac{\int_N \frac{4(n-1)}{n-2}|\nabla_g  u|^2+R_g u^2 \ud V_g}{(\int_{N} |u|^{\frac{2n}{n-2}} \ud V_g)^{\frac{n-2}{n}}}.
\end{align*}
In \cite{ALL}, the authors proved the following fourth-order positive energy theorem.

\begin{theoremletter}[\protect{\cite[Theorem 4]{ALL}}]\label{PMT:fourth-order}
Let $(N^n,g)$  be an AF manifold of asymptotic order $\tau>\frac{n-4}{2}$ and regularity order $\ell=4$, with $n \geq 5$. Assume that (a) $Q_g \geq 0$ and $Y(N,[g])>0$; (b) $Q_g \in L^1(N,g)$. Then $\mathsf{m}_4(g)\geq 0$, with equality if and only if $(N,g)$ is isometric to the flat Euclidean space $\Rn$.
\end{theoremletter}


To prove Theorem \ref{thm:main_4th_mass}, let $M\subset\RR^{n+1}$ be a cloded embedded hypersurface and $Q\in M$ is umbilical. As before, let $\hat M=M\backslash \{Q\}$ and $\hat g=\rho^{-2}g$. Then it is in fact the induced Riemannian metric on $I_Q(M\setminus Q)\subset\RR^{n+1}$. 

\begin{proof}[Proof of Theorem \ref{thm:main_4th_mass}.]
As in Section \ref{Sect:Asym_coor_mass}, write $M$ as a graph of a smooth function $f$ near $Q$. If $f(x) = \phi(x) + A_k + O(r^{k+1})$ for $k \geq 3$, then the calculations in Section \ref{Sect:Asym_coor_mass} indicate that
\begin{equation}\label{order:AF_mfld}
\hat g=\rho^{-2} g= \rho^{-2} (|\ud x|^2+\ud f \otimes \ud f)=|\ud y|^2+O(|y|^{-k}).
\end{equation}
We use the same \emph{adapted} asymptotic coordinates as in \eqref{coor:asymp_natural} and \eqref{inverse_coor:asymp_natural}: 
$$y = \frac{\lambda}{2}\frac{x}{\phi(x)} \quad \Longleftrightarrow \quad x = \frac{y}{\frac{\lambda^2}{4}+\vert y \vert^2},$$
and agian denote $r=|x|, t=|y|$. 
By the inductive estimates \eqref{order_R:induction} and \eqref{eqn:scalar_curv} we know that
$$
R_{\hat{g}} = O(r^{2k}) = O(t^{-2k}).
$$
Observe that 
\begin{align*}
\Delta_{\hat{g}} R_{\hat{g}} &= \hat{g}^{\alpha\beta} \frac{\pa^2 R_{\hat{g}}}{\pa y_\alpha \pa y_\beta} - \hat{g}^{\alpha \beta} \hat{\Gamma}_{\alpha \beta}^\gamma \frac{\pa R_{\hat{g}}}{\pa y_\gamma} \\
&= (\delta_{\alpha\beta} + O(t^{-k}))\frac{\pa^2 R_{\hat{g}}}{\pa y_\alpha \pa y_\beta} + \hat{g}^{\alpha \beta} O(t^{-k-1})O(t^{-2k-1}) \\
&= \sum_\alpha \frac{\pa^2 R_{\hat{g}}}{\pa y_\alpha^2} + O(t^{-3k-2}).
\end{align*}
Here we also used Remark \ref{rem:derivative_decay}.  To adopt the Einstein summation convention, we  write the main term as $\frac{\pa^2 R_{\hat{g}}}{\pa y_\alpha^2}$.
A direct computation yields
\begin{align*}
\frac{\pa x_\beta}{\pa y_\alpha}\frac{\pa x_\gamma}{\pa y_\alpha} =& \frac{\delta_{\beta\gamma}}{(\frac{\lambda^2}{4}+t^2)^2} - \frac{\lambda^2 y_\beta y_\gamma}{(\frac{\lambda^2}{4}+t^2)^4}\\
=& t^{-4}\delta_{\beta \gamma} + O(t^{-6}) \\
=& r^4\delta_{\beta \gamma} + O(r^6)
\end{align*}
and
\begin{align*}
\frac{\pa^2 x_\beta}{\pa y_\alpha^2} =&-(2n+4)\frac{y^\beta}{(\frac{\lambda^2}{4}+t^2)^2}+\frac{8 t^2 y^\beta}{(\frac{\lambda^2}{4}+t^2)^3}\\
=&(4-2n)\frac{t^2 y^\beta}{(\frac{\lambda^2}{4}+t^2)^3}-\frac{n+2}{2}\frac{\lambda^2 y^\beta}{(\frac{\lambda^2}{4}+t^2)^3}\\
=&(4-2n)r^2 x_\beta + O(r^5).
\end{align*}
Hence, putting these facts together we obtain
\begin{align*}
\sum_\alpha \frac{\pa^2 R_{\hat{g}}}{\pa y_\alpha^2}
&= \frac{\pa^2 R_{\hat{g}}}{\pa x_\beta \pa x_\gamma } \frac{\pa x_\beta}{\pa y_\alpha}\frac{\pa x_\gamma}{\pa y_\alpha} + \frac{\pa R_{\hat{g}}}{ \pa x_\beta} \frac{\pa^2 x_\beta}{\pa y_\alpha^2} \\
&= r^4 \Delta R_{\hat{g}} + (4-2n) r^2 x_\alpha \pa_\alpha R_{\hat{g}} + O(r^{2k+4}).
\end{align*}
Notice that $2k+4 \leq 3k+2$ for $k \geq 3$. This in turn implies that
$$
\Delta_{\hat{g}} R_{\hat{g}} = r^4 \Delta R_{\hat{g}} + (4-2n) r^2 x_\alpha \pa_\alpha R_{\hat{g}} + O(r^{2k+4}).
$$

For simplicity, we introduce
$$
B_{2k-4}:= 4n(n-1) (k-1)^2\frac{A_k^2}{r^4} -4(n-1)(k-1) \frac{A_k}{r^2} \Delta A_k + (\Delta A_k)^2 - \vert \nabla^2 A_k \vert^2,
$$
which is a homogeneous polynomial of degree $2k-4$. Then we have 
\begin{equation}\label{eqn:R_B_2k-4}
R_{\hat{g}} = r^4 B_{2k-4} + O(r^{2k+1})
\end{equation}
again by \eqref{order_R:induction} and \eqref{eqn:scalar_curv}.
Consequently, these together with \eqref{order:AF_mfld}, \eqref{Q-curv:AF}, \eqref{def:Q-curv} lead to
\begin{align}\label{asym_expan:Q}
Q_{\hat{g}} &=-\frac{1}{2(n-1)} \Delta_{\hat{g}} R_{\hat{g}} + O(t^{-2k-4})\no \\
&= -\frac{1}{2(n-1)} (r^4 \Delta(r^4 B_{2k-4}) + (4-2n) r^2 x_\alpha \pa_\alpha (r^4 B_{2k-4}) ) + O(r^{2k+4}).
\end{align}

Now we need the following technical lemma.

\begin{lemma}\label{lem:Q_integrability}
  Suppose we have 
  $$Q_{\hat g}=q(\theta)|x|^{2\ell+2}+o(|x|^{2\ell+2}),$$ with $q(\theta)$ not identically zero. Then $Q_{\hat g} \in L^1(\hat M, \hat g)$  if and only if $\ell>\frac{n-2}{2}$. 
\end{lemma}
\begin{proof}
Since $\rho=r^2(1+o(1))$, we have
$$|Q_{\hat g}| \ud V_{\hat g}= r^{2\ell+2}(1+o(1))\rho^{-n} \ud V_g=r^{2\ell+2-2n}(1+o(1))\ud V_g.$$
Then the remaining proof is identical to that of Lemma \ref{lem:R_integrability}.
\end{proof}

Since initially $k\geq 3$, we automatically obtain that $Q_{\hat g}=q(\theta)|x|^{2\ell+2}+o(|x|^{2\ell+2})$ for some $\ell\geq 3$ by \eqref{asym_expan:Q}. Hence a direct consequence of Lemma \ref{lem:Q_integrability} is that $Q_{\hat g} \in L^1(M,\hat g)$ when $5 \leq n \leq 7$.

\medskip

Next we assume $n\geq 8$ and fix $3 \leq k \leq \frac{n-2}{2}$. Since $Q_{\hat g} \in L^1(\hat M,\hat g)$,  by Lemma \ref{lem:Q_integrability} and \eqref{asym_expan:Q}  we have
\begin{align*}
0 &= r^2 \Delta(r^4 B_{2k-4}) + (4-2n) x_\alpha \pa_\alpha (r^4 B_{2k-4}) \\
&= r^2 \left[ (\Delta r^4) B_{2k-4} + 2 \nabla r^4 \cdot \nabla B_{2k-4} + r^4 \Delta B_{2k-4} \right] + (4-2n) \cdot 2k r^4 B_{2k-4} \\
&= r^4 \left[ r^2 \Delta B_{2k-4} +(24k +4n -4kn-24) B_{2k-4}\right]\\
&=r^4 \left[ r^2 \Delta B_{2k-4} -4(n-6)(k-1) B_{2k-4}\right].
\end{align*}
This implies $r^2 \mid B_{2k-4}$. 

By induction, we assume that $B_{2k-4} = r^{2l} p(x)$ for $2l \leq 2k-4$ and $l \geq 0$.
A direct calculation yields
$$
\frac{1}{r^{2l-2}} \Delta B_{2k-4} \equiv 2l(n+4k-2l-10) p(x) \quad \mod r^2
$$
and then
\begin{align*}
0 =& \frac{1}{r^{2l}} \left[ r^2 \Delta B_{2k-4} -4(n-6)(k-1) B_{2k-4}\right] \\
\equiv& \left[ 2l(n+4k-2l-10) -4(n-6)(k-1)\right]p(x)\mod r^2 \\
\equiv& -2(2k-2-l)(n-6-2l) p(x) \mod r^2.
\end{align*}

 We first notice that  $2k-2-l \geq l+2\geq 2$ as $2l \leq 2k-4$.  

When $2l \leq 2k-6$,  $n -2l -6 \geq n -2k  > 0$, then $r^2 \mid p(x)$. So, $r^{2l+2} \mid B_{2k-4}$. In particular  $r^{2k-4} \mid B_{2k-4}$ implies $B_{2k-4}=c r^{2k-4}$ for some $c \in \RR$.  When $2l = 2k-4$, we already have  $B_{2k-4}=c r^{2k-4}$. Consequently, in all cases we have $B_{2k-4}=c r^{2k-4}$.

\medskip

For clarity we let $A_k (x) = r^{2i} p(x)$, $0\leq i \leq \frac{k}{2}$, and we distinguish two cases:
\begin{enumerate}
\item[(1)]
If $n = 2k + 2$, then we use the same calculation as in \eqref{cal:coeff_R} together with the definition of $B_{2k-4}$ to conclude that
\begin{align}\label{indunction:4th-mass}
c r^{n-4i-2} =& 4n(n-1)(k-1)^2 \frac{A_k^2}{r^{4i}} - 4(n-1)(k-1)\frac{A_k}{r^{2i}} \frac{\Delta A_k}{r^{2i-2}} + \frac{1}{r^{4i-4}} \left[ (\Delta A_k)^2 - \vert \nabla^2 A_k \vert^2 \right] \no\\
\equiv& 4(k-i-1) \left[ (k-i-1) (n-4i)(n-1) + 2i(k-2i)\right] \tilde p^2 (x) \quad  \mod r^2.
\end{align}
Notice that the coefficient of $\tilde p^2(x)$ is nonzero. When $i<\frac{k}{2}$, we have $r^2 \mid \tilde p^2(x)$ and hence $r^2 \mid \tilde p(x)$, that is, $r^{2i+2} \mid A_k$. To continue, we further distinguish it by two cases.
\begin{enumerate}[(i)]
\item If $k = 2j -1$ for $j \geq 2$ and $n = 4j$, then we apply \eqref{indunction:4th-mass} with $i=j-1$ to show that $r^{2j} \mid A_k$, which implies $A_k = 0$.

\item If $k = 2j$ for $j \geq 2$ and $n = 4j + 2$, then letting $i=j-1$ in \eqref{indunction:4th-mass} we obtain
$$4 j [6(n-1)j+4(j-1)]\tilde p^2(x) \equiv 0 \quad  \mod r^2.$$
This implies $r^2 \mid \tilde p^2(x)$ and then $r^2 \mid \tilde p(x)$. Eventually, we have
$A_k = \tilde{\kappa} r^{2j}=\tilde{\kappa} r^{\frac{n-2}{2}}$ for some $\tilde{\kappa} \in \RR$.
\end{enumerate}

\item[(2)] If $n \neq 2k+2$, then $n -2l -6 \neq 0$. So, the same argument above gives $r^{2k-2} \mid B_{2k-4}$ and then $B_{2k-4}=0$.  This together with \eqref{indunction:4th-mass} yields $\tilde p^2(x)\equiv 0 \mod r^2$, which implies $r^2 \mid \tilde p(x)$ and then $r^{2i+2} \mid A_k$. So, we repeat the procedure up to $k=\lfloor \frac{n}{2} \rfloor-1$ and obtain $r^{\lfloor \frac{n}{2}\rfloor}  \mid A_k=0$, which implies $A_k=0$.
\end{enumerate}

Therefore, collecting these facts together we arrive at

\begin{proposition}\label{prop:vanishing_order_g-Q}
If $n\geq 8$ and $Q_{\hat{g}} \in L^1(M,\hat{g})$, then near $Q$  we have
\begin{enumerate}[(i)]
\item when $n \neq 4j+2$ for $j\geq 2$, $f(x) = \phi (x) + O(|x|^{\lfloor \frac{n}{2} \rfloor})$;
\item when $n=4j+2$ for $j\geq 2$, $f(x) = \phi(x) + \tilde{\kappa} |x|^{\frac{n}{2}-1} + O(|x|^{\frac{n}{2}}), ~\tilde{\kappa} \in \RR$.
\end{enumerate}
\end{proposition}

To complete the proof of Theorem \ref{thm:main_4th_mass}, we only need to determine the asymptotic order and to explicitly compute the fourth-order energy.

\begin{enumerate}
\item[(1)] When $5\leq n\leq 7$, then $f(x) = \phi (x) + O(|x|^3)$, and Lemma \ref{lem:asym_order-coor} yields that $(M\backslash \{Q\},\hat g)$ is asymptotically flat of order 3. Note that we do not need to worry about the regularity order in view of Remark \ref{rem:derivative_decay}. Since $R_{\hat g}=O(t^{-6})$, we have
\begin{align*}
\mathsf{m}_4(\hat{g})=&- \frac{1}{\vert \mathbb{S}^{n-1} \vert} \lim_{t \to \infty} \int_{\mathbb{S}_t^{n-1}} \pa_t R_{\hat{g}} \ud \sigma_{\Sp_t^{n-1}} \\
=& \frac{1}{\vert \mathbb{S}^{n-1} \vert} \lim_{t \to \infty} \int_{\mathbb{S}^{n-1}} O(t^{n-8}) \ud \sigma \\
=&0.
\end{align*}
\item[(2)]
When $n \neq 4j +2$ for $j \geq 2$, by Proposition \ref{prop:vanishing_order_g-Q}, \eqref{order_R:induction} and \eqref{eqn:scalar_curv} we obtain
$$
R_{\hat{g}} = O(t^{-2m}) \qquad \mathrm{~~with~~}\quad m =\lfloor \frac{n}{2} \rfloor.
$$
Combing Proposition \ref{prop:vanishing_order_g-Q} and Lemma \ref{lem:asym_order-coor} yields that $(M\backslash \{Q\},\hat g)$ is asymptotically flat of order $\lfloor \frac{n}{2} \rfloor$. 
Hence, it is direct to calculate
\begin{align*}
\mathsf{m}_4(\hat{g})=&- \frac{1}{\vert \mathbb{S}^{n-1} \vert} \lim_{t \to \infty} \int_{\mathbb{S}_t^{n-1}} \pa_t R_{\hat{g}} \ud \sigma_{\Sp_t^{n-1}} \\
=& \frac{1}{\vert \mathbb{S}^{n-1} \vert} \lim_{t \to \infty} \int_{\mathbb{S}^{n-1}} O(t^{n-2m-2}) \ud \sigma \\
=&0.
\end{align*}

\item[(3)]When $n = 4j+2$,  letting $i=\frac{n-2}{4}$ in \eqref{indunction:4th-mass} we obtain
$$c=\frac{(n-1)(n-6)^2}{2} \tilde \kappa^2,$$
whence
$$
R_{\hat{g}} = c t^{2-n} + O(t^{1-n})
$$
by \eqref{eqn:R_B_2k-4}. Similarly, Lemma \ref{lem:asym_order-coor} together with Proposition \ref{prop:vanishing_order_g-Q} yields that $(M\backslash \{Q\},\hat g)$ is asymptotically flat of order $\frac{n}{2}-1$. Then we have
\begin{align*}
\mathsf{m}_4(\hat{g})&= -\frac{1}{\vert \mathbb{S}^{n-1} \vert} \lim_{t \to \infty} \int_{\mathbb{S}_t^{n-1}} \pa_t R_{\hat{g}} \ud \sigma_{\Sp_t^{n-1}}\\
&= -\frac{1}{\vert \mathbb{S}^{n-1} \vert} \lim_{t \to \infty} \int_{\mathbb{S}^{n-1}} ((2-n)c t^{1-n} + O(t^{-n})) t^{n-1}\ud \sigma \\
&=\frac{(n-1)(n-2)(n-6)^2}{2} \tilde \kappa^2.
\end{align*}
\end{enumerate}
Hence the fourth-order energy is always non-negative and vanishes except for the case $n=4j+2, j\geq 2$. This concludes the proof.
\end{proof}

Now we use Theorem \ref{thm:main_4th_mass} to prove Theorem \ref{thm:Paneitz_oper}. 
\begin{proof}[Proof of Theorem \ref{thm:Paneitz_oper}.]
For $n\geq 5$ and $n\neq 4j+2, j\geq 2$, let $(M^n,g)$ be a closed embedded hypersurface in $\RR^{n+1}$ such that the Yamabe constant $Y(M,[g])>0$. Assume that for some point $Q$, the Green function for $P_4^g$ with pole $Q$ is of the form $c_{n,2}\|\cdot-Q\|^{4-n}$. As before, let $\hat M=M\backslash \{Q\}$ and $\hat g=\rho^{-2}g$.  Then by definition of the Green function for $P_4^g$, we know that $Q_{\hat g}\equiv 0$ and hence integrable. Finally, by Theorem \ref{thm:main_4th_mass}, we have $\mathsf{m}_4(g)=0$

To apply the positive energy Theorem \ref{PMT:fourth-order}, we need to show that $Y(\hat M,\hat g)>0$.

\begin{lemma}\label{lem:Yamabe_const}
Under the assumption that $Y(M,g)>0$, we also have $Y(\hat M,\hat g)>0$.
\end{lemma}

\begin{proof}
For clarity, we set $L_g=P_2^g$.  Since $Y(M,[g])>0$, we let
\begin{align*}
\bar g=&G_{L_g}(\cdot,Q)^{\frac{4}{n-2}} g\\
=&G_{L_g}(\cdot,Q)^{\frac{4}{n-2}}c_{n,2}^{-\frac{4}{n-4}}\rho^2  \hat g\\
=&\left(c_{n,2}^{-1}\rho^{\frac{n-2}{2}}G_{L_g}(\cdot,Q)\right)^{\frac{4}{n-2}}\hat g.
\end{align*}
Clearly, $R_{\bar g}=0$ on $M \backslash \{Q\}$. Notice that
$$\rho^{2-n}=r^{2-n}(1+O(r^2))$$
by \cite[Lemma 9]{Chen-Shi} and
$$c_{n,2}^{-1}G_{L_g}(\cdot,Q)=r^{2-n}(1+O(r^2))$$
by Parker-Rosenberg
\cite[Theorem 2.2 and (4.4)]{ParkerRosenberg}, where $r=\ud_g(\cdot,Q)$.
Take
$$\phi=c_{n,1}^{-1}\rho^{\frac{n-2}{2}}G_{L_g}(\cdot,Q)=1+O(r^2),$$
such that $\phi-1 \in W_{-\tau}^{2,p}(M\backslash \{Q\})$\footnote{See \cite[p.566]{Maxwell} for the definition of $W_{-\tau}^{2,p}(M\backslash \{Q\})$.} for some $p>n/2$ and all $\tau \in (0,2)$, and $R_{\phi^{4/(n-2)}\hat g}=0$ on $M \backslash \{Q\}$. 
Thanks to Maxwell \cite[Proposition 3 and Remark 1]{Maxwell}, we obtain $Y(\hat M,[\hat g])>0$.
\end{proof}

Now  the positive energy Theorem \ref{PMT:fourth-order} states that $(M\backslash \{Q\},\hat g)$ is isometric to  $(\RR^n,\delta)$. Consequently, by Lemma \ref{lem:finalStep}, $M$ is itself a round sphere.

\medskip

In the remaining case $n=3$, we notice that $Y(M,g)>0$ by assumption and $G(\cdot,Q)=c_{n,2}\rho^{1/2}\leq 0$. 
Now we use the following conformal positive mass theorem due to Hang-Yang.\footnote{Since Hang-Yang's proof also hinges on the Green function for the conformal Laplacian, the assumption $Y(M,[g])>0$ is implicitly needed.}
\begin{propositionletter}[\protect{\cite[Propositon 2.4]{HY2}}]
Let $(M^3,g)$ be a close manifold such that $Y(M,[g]>0)$ and $\mathrm{ker} P_4^g=\{0\}$. If the Green function of $P_4^g$ satisfies $G(\cdot,Q)<0$ for some $Q \in M$, then $G(Q,Q)<0$ unless $(M,g)$ is conformally equivalent to $(\Sp^3,g_{\Sp^3})$.
\end{propositionletter}
Then it follows that $(M,g)$ is conformal to $\Sp^3$.
\end{proof}

\begin{remark}
When $5\leq n\leq 7$, since $Y(M,[g])>0$ by assumption, together with Proposition \ref{prop:4-th_mass_Green-fcn} we can adapt the approach of Gursky-Malchiodi \cite[Theorem 2.9]{GM} to show that $(M \backslash \{Q\},G(\cdot, Q)^{\frac{4}{n-4}}g)$ is isometric to the flat Euclidean space $\RR^n$. This gives an alternative proof in the lower dimensions.
\end{remark}

\appendix

\section{A generalization of Reilly formula for entire graphs} 

As another application of Proposition \ref{prop:useful_formula}, we generalize the result due to Reilly \cite[Proposition 4.1]{Reilly} for $\sigma_k$ curvature of graphs in Euclidean spaces, which should be of independent interest. Although Reilly's formula is originally meaningful only for graphs, the following proposition shows that it is not the case.

\begin{proposition}
Let $M\subset \RR^{n+1}$ be an oriented hypersurface with a unit normal vector field $\bf n$. Let $\bf e$ be a constant vector field on $\RR^{n+1}$ and let $v:=\bold e^\top$ be its tangential part along $M$. For $1 \leq k \leq n$, denote by  $\sigma_k(\mathcal S)$ the $k$-th elementary symmetric function of the shape operator $\mathcal S=-\nabla \bold n$ and by
$$T_{k-1}(\mathcal S)_\alpha^\beta=\frac{\pa \sigma_k(\mathcal S)}{\pa h_\beta^\alpha}$$
its $(k-1)$-th Newton transform.
Then we have
\begin{equation}\label{sigma_k-divergence}
k \sigma_k(\mathcal S)\langle\bold e,\bold n\rangle=\div_g(i_vT_{k-1}(\mathcal S))^\sharp.
\end{equation}
\end{proposition}
\begin{proof}
Since $T_{k-1}(\mathcal S)$ is divergent-free, taking the symmetric 2-tensor  $T_{\alpha \beta}=g_{\alpha \gamma}T_{k-1}(\mathcal S)_\beta^\gamma$ in Proposition \ref{prop:useful_formula} yields the desired assertion.
\end{proof}

Note that when $M$ is a graph of the form $\{(x, f(x))|\ x\in U\subset\RR^n\}$, if we take ${\bf e}=e_{n+1}$ in \eqref{sigma_k-divergence} and identify $\frac{\pa}{\pa x_\alpha}$ with $e_\alpha+f_\alpha e_{n+1}$ as above, thereby $e_{n+1}^\top=\frac{f_\alpha}{1+|\nabla f|^2}\frac{\pa}{\pa x_\alpha}$, and hence \eqref{sigma_k-divergence} recovers Reilly's original result.

\section{Mass of Green function for Paneitz operator and $\mathsf{m}_4$}
We give an alternative proof of \cite[Proposition 6]{ALL}, which characterizes the relation between the mass of the Green function $G(\cdot, Q)$ and $\mathsf{m}_4(\hat g)$ for the asymptotic manifold $(M \backslash \{Q\},\hat g:=G(\cdot, Q)^{\frac{4}{n-4}}g)$. Our proof is elementary and greatly simplifies the original proof in \cite{ALL}.

\begin{proposition}\label{prop:4-th_mass_Green-fcn}
Let $(M^n,g)$ be a closed manifold with  $n=5,6,7$ or $(M,g)$ is locally conformally flat and $n \geq 8$. Suppose $(M,g)$ admits a positive Green function $G(\cdot,Q)$ for the Paneitz operator $P_4^g$. Then there is a conformal metric of $g$ (still denoted by itself) such that
$$(c_{n,2})^{-1} G(\cdot,Q)=r^{4-n}+\mathsf{A}+O(r),\quad  r=|x|=\ud_g(x,Q),~ \mathsf{A} \in \RR,$$
and $\mathsf{m}_4(\hat g)=8(n-1)(n-2)A c_{n,2}$ for $\hat g=G(\cdot, Q)^{\frac{4}{n-4}}g$. Here, $c_{n,2}=\big(2(n-2)(n-4)|\Sp^{n-1}|\big)^{-1}$.
\end{proposition}
\begin{proof}
Under conformal normal coordinates around $Q$,  for any $N>n+3$ we have 
$$\det g=1+O(r^N), \qquad R_g=O(r^2) $$
and  from  Gursky-Malchiodi \cite[Theorem 2.9]{GM} or Hang-Yang \cite[Proposition 2.1]{HY1} that the expansion of $G(\cdot,Q)$ around $Q$ is given by
$$c_{n,2}^{-1} G(\cdot,Q)=r^{4-n}+\mathsf{A}+O(r).$$
  The readers refer to Lee-Parker \cite{Lee-Parker} for the basics of the conformal normal coordinates.

  In the following, we normalize $c_{n,2}=1$.

Consider the asymptotically flat manifold $(\hat M:=M \backslash \{Q\},\hat g=G(\cdot,Q)^{\frac{4}{n-4}}g)$.  For $n=5,6,7$, under inverted coordinates $y=x|x|^{-2}$ with $t=|y|=r^{-1}$, we have
$$\hat g_{\alpha \beta}(y)=\gamma(y)^{\frac{4}{n-4}}(\delta_{\alpha \beta}+O(t^{-2})),$$
where 
$$\gamma(y)=1+\mathsf{A} t^{4-n}+O(t^{3-n}).$$
This means that $(\hat M,\hat g)$ is asymptotically flat of order $\tau=1$ for $n=5$ and of order $\tau=2$ for $n=6,7$; if $(M,g)$ is locally conformally flat and $n\geq 8$, then we may take $g_{ij}=\delta_{ij}$ near $Q$, thereby $(\hat M,\hat g)$ is asymptotically flat of order $\tau=n-4$. So, in both cases $\mathsf{m}_4(\hat g)$ is well defined.

By definition of the fourth-order energy we have
\begin{align}\label{4th-mass}
\mathsf{m}_4(\hat g)=-\frac{1}{|\Sp^{n-1}|}\lim_{t \to \infty} \int_{\Sp_t^{n-1}}\pa_t R_{\hat g} \ud \sigma_{\Sp_t^{n-1}}
=\frac{1}{|\Sp^{n-1}|}\lim_{r \to 0} \int_{\Sp^{n-1}} r^{3-n} \pa_r R_{\hat g} \ud \sigma.
\end{align}

Write $\hat g=G^{\frac{4}{n-4}}g=(G^{\frac{n-2}{n-4}})^{\frac{4}{n-2}}g$. Then the scalar curvature equation gives
$$R_{\hat g}=G^{-\frac{n+2}{n-4}}\left(-\frac{4(n-1)}{n-2}\Delta_g G^{\frac{n-2}{n-4}}+R_g G^{\frac{n-2}{n-4}} \right).$$

For any radial function $u=u(r)$ we have
$$\Delta_g u=\Delta u+O(r^{N-1})u',$$
where $\Delta=\pa_r^2+\frac{n-1}{r}\pa_r$. Observe that
$$ \Delta_g G^{\frac{n-2}{n-4}}=\frac{n-2}{n-4} r^{1-n}\pa_r(r^{n-1}G^{\frac{2}{n-4}}G')+O(r^{N-1}) G^{\frac{2}{n-4}}|G'|=-2(n-2)\mathsf{A} r^{-4}+O(r^{-3}).$$
This follows that
$$-\frac{4(n-1)}{n-2}G^{-\frac{n+2}{n-4}}\Delta_g G^{\frac{n-2}{n-4}}=8(n-1)\mathsf{A} r^{n-2}+O(r^{n-1}).$$
Also we obtain
$$R_g G^{-\frac{4}{n-4}}=O(r^6).$$
Hence, inserting these estimates into \eqref{4th-mass} yields $\mathsf{m}_4(\hat g)=8(n-1)(n-2)\mathsf{A}$.
\end{proof}

\begin{remark}
In \cite[Proposition 2.1(c)]{Chen-Hou}, Hou and the first named author proved that there is a mass of the Green function for $P_6^g$ provided that $(M^n,g)$ is locally conformally flat and $n\geq 10$ or $n=7,8,9$.
\end{remark}

\end{document}